\documentclass{AIMS}
\usepackage{amsmath}
  \usepackage{paralist}
  \usepackage{graphics} 
  \usepackage{epsfig} 
 \usepackage[colorlinks=true]{hyperref}
\usepackage{enumerate}

\usepackage{amssymb}
\usepackage{graphicx}
\usepackage{times}
\usepackage{color}
\usepackage{xspace,colortbl}
\DeclareGraphicsExtensions{.png,.pdf,.eps}
\hypersetup{urlcolor=blue, citecolor=red}

  \textheight=8.2 true in
   \textwidth=5.0 true in
    \topmargin 30pt
     \setcounter{page}{1}



\newtheorem{theorem}{Theorem}[section]
\newtheorem{corollary}[theorem]{Corollary}

\newtheorem{lemma}[theorem]{Lemma}
\newtheorem{proposition}[theorem]{Proposition}

\theoremstyle{definition}
\newtheorem{definition}[theorem]{Definition}

\newtheorem{example}[theorem]{Example}
\newcommand{\onetom}{1,\dots,m}
\newcommand{\oneton}{1,\dots,n}

\newcommand{\onetoK}{1,\dots,K}

\title[Consensus and synchronization in networks]
      {Consensus and synchronization in discrete-time networks of
multi-agents with stochastically switching topologies and time
delays}

\author[W. Lu, F. M. Atay and J. Jost]{}

\subjclass{Primary: 93C05, 37H10, 15A51, 40A20; Secondary: 05C50, 60J10.}
 \keywords{Consensus, synchronization, delay, network of multi-agents, adapted
process, switching topology.}

 \email{wenlian@fudan.edu.cn}
 \email{atay@mis.mpg.de}
 \email{jost@mis.mpg.de}


\begin{document}
\maketitle

\centerline{\scshape Wenlian Lu }
\medskip
{\footnotesize
 \centerline{Center for Computational Systems Biology, School of Mathematical Sciences,}
   \centerline{Fudan University, Shanghai, China}
   \centerline{and Max Planck Institute for Mathematics in the Sciences,}
   \centerline{Inselstr. 22, 04103 Leipzig, Germany}
} 

\medskip

\centerline{\scshape Fatihcan M. Atay}
\medskip
{\footnotesize
 \centerline{Max Planck Institute for Mathematics in the Sciences,}
   \centerline{Inselstr. 22, 04103 Leipzig, Germany}
   }

\medskip

\centerline{\scshape J\"{u}rgen Jost}
\medskip
{\footnotesize
 \centerline{Max Planck Institute for Mathematics in the Sciences,}
   \centerline{Inselstr. 22, 04103 Leipzig, Germany}
\centerline{and Santa Fe Institute for the Sciences of Complexity}
\centerline{1399 Hyde Park Road, Santa Fe, NM 87501, USA}
   }

\bigskip

 \centerline{(Communicated by the associate editor name)}

\begin{abstract}
We analyze stability of consensus algorithms in networks of
multi-agents with time-varying topologies and delays. The topology
and delays are modeled as induced by an adapted process and are rather
general, including i.i.d.\ topology processes, asynchronous consensus
algorithms, and Markovian jumping switching. In case the self-links are instantaneous, we prove that the network
reaches consensus for all bounded delays if the graph corresponding to the
conditional expectation of the coupling matrix sum across a finite
time interval has a spanning tree almost surely.
Moreover, when self-links are also delayed
and when the delays satisfy certain integer patterns, we
observe and prove that the algorithm may not reach consensus but
instead synchronize at a periodic trajectory, whose period depends
on the delay pattern. We also give a brief discussion on the
dynamics in the absence of self-links.

\end{abstract}

\section{Introduction}
Consensus problems have been recognized as important in distribution
coordination of dynamic agent systems, which is widely applied in
distributed computing \cite{Lyn}, management science \cite{DeG},
flocking/swarming theory \cite{Vic}, distributed control \cite{Fax},
and sensor networks \cite{Olf}. In these applications, the
multi-agent systems need to agree on a common value for a certain
quantity of interest that depends on the states of the interests of
all agents or is a preassigned value.
The interaction rule for each agent specifying the information
communication between itself and its neighborhood is called the
\textit{consensus protocol/algorithm}. A related concept of
consensus, namely \textit{synchronization}, is considered as
``coherence of different processes'', and is a widely existing
phenomenon in physics and biology. Synchronization of interacting
systems has been one of the focal points in many research and
application fields \cite{Win,Kur,Pik}. For more details on consensus
and the relation between consensus and synchronization, the reader is
referred to the survey paper \cite{Olf1} and the references therein.

A basic idea to solve the consensus problem is updating the current
state of each agent by averaging the previous states of its
neighborhood and its own. The question then is whether or under
which circumstances the multi-agent system can reach consensus by
the proposed algorithm. In the past decade, the stability analysis
of consensus algorithms has  attracted much attention in control
theory and mathematics \cite{Olf1}. The
core purpose of stability analysis is not only to obtain the
algebraic conditions for consensus, but also to get the consensus
properties of the topology of the network. The basic discrete-time
consensus algorithm can be formulated as follows:
\begin{eqnarray}
x_{i}^{t+1}=x_{i}^{t}+\epsilon\sum_{j\in\mathcal
N_{i}}(x_{j}^{t}-x^{t}_{i}),~i=\onetom,\label{form1}
\end{eqnarray}
where $x^{t}_{i}\in\mathbb R$ denotes the state variable of the
agent $i$, $t$ is the discrete-time, $\mathcal N_{i}$ denotes the
neighborhood of the agent $i$, and $\epsilon$ is the coupling
strength. Define $\mathcal{L}=[l_{ij}]_{i,j=1}^{m}$ as the Laplacian of the
graph of the network in the manner that $l_{ij}=1$ if
$i\ne j$ and a link from $j$ to $i$ exists, $l_{ij}=0$ if
that $i\ne j$ and no link from $j$ to $i$ exists, and
$l_{ii}=-\sum_{j\ne i}l_{ij}$.  With $G=I-\epsilon \mathcal{L}$,
(\ref{form1}) can be rewritten as
\begin{eqnarray}
x^{t+1}=G x^{t},\label{form2}
\end{eqnarray}
where $x^{t}=[x^{t}_{1},\dots,x^{t}_{m}]^{\top}$.
If the diagonal elements in $G$ are nonnegative, i.e.,
$0 \le \epsilon\le 1/\max_{i}l_{ii}$, then $G$ is a stochastic matrix.
Eq. (\ref{form2}) is a general model of the synchronous consensus
algorithm on a network with fixed topology. The network can be a
directed graph, for example, the leader-follower structure
\cite{Mes}, and may have weights.

In many real-world applications, the connection structure may change in time,
for instance when the agents are moving in physical space.
One must then consider time-varying topologies under link failure
or creation. The asynchronous consensus algorithm also indicates
that the updating rule varies in time \cite{FAT}. Thus, the
consensus algorithm becomes
\begin{eqnarray}
x^{t+1}=G(t)x^{t},\label{form3}
\end{eqnarray}
where the time-varying coupling matrix $G(t)$ expresses to the
time-varying topology.
We associate $G(t)$ with a directed graph at time $t$
(see Sec.~2), in which $G_{ij}(t)>0$ implies that
there is a link from $j$ to $i$ at time $t$,
which may be a \textit{self-link} if $i=j$.
Note that the self links in $G$ arise from the presence of the
$x_i$ on the right hand side of (\ref{form1});
they do not necessarily mean that the physical network
of multi-agents have self-loops.

Furthermore, delays
occur inevitably due to limited information transmission speed.
The consensus algorithm with transmission delays can be described as
\begin{eqnarray}
x^{t+1}_{i}=\sum_{j=1}^{m}G_{ij}(t)x^{t-\tau_{ij}^{t}}_{j},\label{form4}
\end{eqnarray}
where $\tau_{ij}^{t}\in\mathbb N$, $i,j=\onetom$, denotes the
time-dependent delay from vertex $j$ to $i$.
A link from $j$ to $i$ is called
 \textit{instantaneous} if $\tau_{ij}^{t}=0$
 $\forall t$, and  \textit{delayed} otherwise.

In this paper, we study a general consensus problem
in networks with time-varying topologies and time delays described by
\begin{eqnarray}
x^{t+1}_{i}=\sum\limits_{j=1}^{m}G_{ij}(\sigma^{t})x^{t-\tau_{ij}
(\sigma^{t})}_{j},\quad i=\onetom, \label{multi_delay1}
\end{eqnarray}
as well as the more general form
\begin{eqnarray}
x^{t+1}_{i}=\sum\limits_{\tau=0}^{\tau_{M}}\sum\limits_{j=1}^{m}G^{\tau}_{ij}
(\sigma^{t})x^{t-\tau}_{j},
\quad i=\onetom.\label{delayed}
\end{eqnarray}
Note that (\ref{multi_delay1}) can be put into the form (\ref{delayed})
by partitioning the inter-links according to delays,
where $\tau_M$ is the maximum delay.
However, (\ref{delayed}) is more general, as it in principle
allows for multiple links with different delays between the same pair of vertices.
In particular, there may exist both instantaneous and delayed self-links,
which may naturally arise in a model like (\ref{form1}) where the term
$x_i$ appears both by itself as well as under the summation sign.
In reference to (\ref{delayed}), we talk about \emph{self-link(s)} when $G_{ii}^\tau\neq0$, which may be \emph{instantaneous} or \emph{delayed} depending on whether $\tau=0$ or $\tau>0$, respectively.
In equations (\ref{multi_delay1})--(\ref{delayed}), $\sigma^{t}$
denotes a stochastic process,
$G(\sigma^{t})=[G_{ij}(\sigma^{t})]_{i,j=1}^{n}=[\sum_{\tau=0}^{\tau_M}G_{ij}^{\tau}(\sigma^t)]_{i,j=1}^{n}$ is a stochastic
matrix, $\tau_{ij}(\sigma^{t})\in\mathbb N$ is the stochastically-varying transmission delay from agent $j$ to agent $i$. This model can
describe, for instance, communications between randomly moving
agents, where the current locations of the agents, and
hence  the links between them, are regarded as stochastic.
Furthermore, the delays are also stochastic since they arise due to
the distances between agents. In this paper, $\{\sigma^{t}\}$ is
assumed to be an adapted stochastic process.

\begin{definition}(\emph{Adapted process})
Let $\{A_{k}\}$ be a stochastic process defined on the basic
probability space $\{\Omega,\mathcal{F},P\}$, with the state space
$\Omega$, the $\sigma$-algebra $\mathcal F$, and the probability
$\mathbb P$. Let $\{\mathcal{F}^{k}\}$ be a filtration, i.e., a
sequence of nondecreasing sub-$\sigma$-algebras of $\mathcal{F}$. If
$A_{k}$ is measurable with respect to (w.r.t.) $\mathcal{F}^{k}$,
then the sequence $\{A_{k},\mathcal{F}^{k}\}$ is called an adapted
process.
\end{definition}

Via a standard transformation, any stochastic process can be
regarded as an adapted process. Let $\{\xi_{t}\}$ be a stochastic
process in probability spaces $\{\Omega^{t},\mathcal H^{t},\mathbb
P^{t}\}$. Define $\Omega=\prod_{t}\Omega^{t}$, $\mathcal F$ and
$\mathbb P$ are both induced by $\prod_{t}\mathcal H^{t}$ and
$\prod_{t}\mathbb P^{t}$, where $\prod$ stands for the Cartesian product. Let $\sigma^{t}=[\xi^{k}]_{k=1}^{t}$ and
$\mathcal F^{t}$ be the minimal $\sigma$-algebra induced by
$\prod_{k=1}^{t}\mathcal H^{t}$. Then $\mathcal
F^{t}$ is a filtration. Thus, it is clear that the notion of an
adapted process is rather general, and it contains i.i.d. processes,
Markov chains, and so on, as special cases.

\textbf{Related work.} Many recent papers address the stability
analysis of consensus in networks of multi-agents. However, the
model (\ref{multi_delay1}) with delays we have proposed above is
more general than the existing models in the literature. We first
mention some papers where models of the form (\ref{form3}) are
treated. A result from \cite{Mor} shows that (\ref{form3}) can reach
consensus uniformly if and only if there exists $T>0$ such that the
union graph across any $T$-length time period has a spanning tree.
Ref. \cite{Cao1} derived a similar condition for reaching a
consensus via an equivalent concept: {\em strongly rooted} graph.
Our previous papers \cite{Lu1,Lu2} studied synchronization of nonlinear
dynamical systems of networks with time-varying topologies by a
similar method. Ref. \cite{XW} has pointed out that under the
assumption that self-links always exist and are instantaneous
(i.e.~without delays), the condition presented in Ref.~\cite{Mor} also
guarantees consensus with arbitrary bounded multiple delays.
However, this criterion may not work when the time-varying topology
involves randomness, because for any $T>0$, it might occur with
positive probability that the union graph across some $T$-length
time period does not have a spanning tree for any $T$. Refs.~\cite{Hat,Wu,AA} studied the consensus in networks under the circumstance
that the processes $\{G(t)\}_{t\ge 0}$ are independently and
identically distributed (i.i.d.) and \cite{Zhang} also investigated
the stability of consensus of multi-agent systems with Markovian
switching topology with finite states. In these papers, consensus is considered
in the almost sure sense. Ref.~\cite{Fag} studied a particular
situation with packet drop communication. The most related
literature to the current paper is \cite{LLC}, where a general
stochastic process, an adapted process, was introduced to model the
switching topology, which generalized the existing works
including i.i.d. and Markovian jumping topologies as special cases.
The authors proved that, if the $\delta$-graph (see its definition in Sec.~2.2) corresponding to the conditional expectation of the coupling matrix sum across a finite time interval has a spanning tree almost surely,
then the system reaches consensus. However, none of those works
considered the stochastic delays but rather assumed that self-links
always exist. There are also many papers concerned with the
continuous-time consensus algorithm on networks of agents with
time-varying topologies or delays. See Ref.~\cite{Olf2} for a
framework and Ref. \cite{Olf1} for a survey, as well as Refs.
\cite{Mor1,Bli,XW1,Mich}, among others. Also, there are papers
concerned with nonlinear coupling functions \cite{DJ} and general
coordination \cite{LMA}.

\textbf{Statement of contributions.} In the following sections, we
study the stability of the consensus of the delayed system
(\ref{multi_delay1}), where $\sigma^{t}$ is an adapted process.
First, we consider the case that each agent contains an
instantaneous self-link. In this case, we show that the same
conditions enabling the consensus of algorithms without transmission delays, as mentioned
in Ref. \cite{LLC}, can also guarantee consensus for the case of
arbitrary bounded delays. Second, in case that delays also occur at
the self-links (for example, when it costs time for each agent to
process its own information), and only certain delay patterns can
occur, we show that
the algorithm does not necessarily reach
consensus but may synchronize to a periodic trajectory instead. As we
show, the period of the synchronized state depends on the possible
delay patterns. Finally, we briefly study the situation without
self-links, and present consensus conditions based on the graph
topology and the product of coupling matrices.

The basic tools we use are theorems about product of stochastic
matrices and the results from probability theory. Ref.~\cite{CS} has
proved a necessary and sufficient condition for the convergence of
infinite stochastic matrix products, which involves the concept of
scramblingness. Ref. \cite{Wolf} provided a means to get scrambling
matrices (defined in Sec. ~2.2) from products of finite stochastic indecomposable aperiodic (SIA) matrices and Ref.~\cite{XW}
showed that an SIA matrix can be guaranteed if the corresponding
graph has a spanning tree and one of the roots has a self-link. The
Borel-Cantelli lemma \cite{Durrett} indicates that if the
conditional probability of the occurrence of SIA matrices in a
product of stochastic matrices is always positive, then it occurs
infinitely often. These previous results give a bridge connecting
the properties of stochastic matrices, graph topologies, and
probability theory which we will call upon in the present paper.

The paper is organized as follows. Introductory notations,
definitions, and lemmas are given in Sec.~2. The dynamics of the
consensus algorithms in networks of multi-agents with switching
topologies and delays, which are modeled as adapted processes, are
studied in Sec.~3. Applications of the results are provided in
Sec.~4 to i.i.d. and Markovian jumping switching. Proofs of theorem are presented in Sec.~5. Conclusions are
drawn in Sec.~6.

\section{Preliminaries}
This paper is written in terms of stochastic process and algebraic graph
theory. For the reader's convenience, we present some necessary
notations, definitions and lemmas in this section. In what follows,
$\underline{N}$ denotes the integers from $1$ to $N$, i.e.,
$\underline{N}=\{1,\dots,N\}$. For a vector
$v=[v_{1},\dots,v_{n}]^{\top}\in\mathbb R^{n}$, $\|v\|$ denotes
some norm to be specified, for instance, the $L^1$ norm
$\|v\|_{1}=\sum_{i=1}^{n}|v_{i}|$. $\mathbb N$ denotes the set of positive
integers and $\mathbb Z$ denotes the integers. For two
integers $i$ and $j$, we denote by $\langle i\rangle_{j}$ the
quotient integer set $\{kj+i:~k\in\mathbb Z\}$. The greatest common
divisor of the integers $i_{1},\dots,i_{K}$ is denoted
$\mbox{gcd}(i_{1},\dots,i_{K})$. The product $\prod_{k=1}^{n}B_{k}$
of matrices denotes the left matrix product $B_n \times \cdots
\times B_1$. For a matrix $A$, $A_{ij}$ or $[A]_{ij}$ denotes the entry of $A$ on the $i$th row
and $j$th column. In a block matrix $B$, $B_{ij}$ or $[B]_{ij}$ can also stand for its $i,j$-th block. For two
matrices $A$, $B$ of the same dimension, $A\ge B$ means
$A_{ij}\ge B_{ij}$ for all $i$, $j$, and the relations
$A>B$, $A<B$, and $A\le B$ are defined similarly. $I_{m}$ denotes the identity
matrix of dimension $m$.

\subsection{Probability theory}
$\{\Omega,\mathcal{F},\mathbb P\}$ is our general notation for a
probability space, which may be different in different contexts. In this notation, $\Omega$ stands for the state space, $\mathcal{F}$ the Borel $\sigma$-algebra, and $\mathbb P\{\cdot\}$ the probability on $\Omega$. $\mathbb E_{\mathbb P}\{\cdot\}$ is the
expectation with respect to $P$ (sometimes $\mathbb E$ for
simplicity, if no ambiguity arises). For any $\sigma$-algebra
$\mathcal{G}\subseteq \mathcal{F}$, $\mathbb E\{\cdot|\mathcal{G}\}$
($\mathbb P\{\cdot|\mathcal{G}\}$) is the conditional expectation
(probability, respectively) with respect to $\mathcal{G}$. It should
be noted that both $\mathbb E\{\cdot|\mathcal{G}\}$ and $\mathbb
P\{\cdot|\mathcal{G}\}$ are actually random variables measurable
w.r.t. $\mathcal{G}$. The following lemma provides the
general statement of the principle of large numbers.

\begin{lemma}\label{lemSecondBorelCantelli} \cite{Durrett} (The Second
Borel-Cantelli Lemma) Let $\mathcal{F}_{n}$, $n\ge 0$ be a
filtration with $\mathcal{F}_{0}=\{\emptyset,\Omega\}$ and $C_{n}$,
$n\ge 1$ a sequence of events with $C_{n}\in\mathcal{F}_{n}$. Then
\begin{eqnarray*}
\{C_{n}~~\textnormal{ infinitely often
}\}=\big\{\sum_{n=1}^{+\infty}P\{C_{n}|\mathcal{F}_{n-1}\}=+\infty\big\}
\end{eqnarray*}
with a probability $1$, where "infinitely often" means that an
infinite number of $\{C_{n}\}_{n=1}^{\infty}$ occur.
\end{lemma}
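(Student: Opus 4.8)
The plan is to reduce the statement to the almost-sure convergence dichotomy for martingales with bounded increments. Writing $p_m=\mathbb P\{C_m|\mathcal F_{m-1}\}$, which is $\mathcal F_{m-1}$-measurable and takes values in $[0,1]$, I would introduce
\begin{eqnarray*}
S_n=\sum_{m=1}^{n}\big(\mathbf 1_{C_m}-p_m\big),\qquad S_0=0.
\end{eqnarray*}
Since $\mathbb E\{\mathbf 1_{C_m}-p_m\mid\mathcal F_{m-1}\}=p_m-p_m=0$ and $\mathbf 1_{C_m}-p_m$ is $\mathcal F_m$-measurable, $\{S_n,\mathcal F_n\}$ is a martingale whose increments satisfy $|S_n-S_{n-1}|=|\mathbf 1_{C_n}-p_n|\le 1$. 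The target identity $\{\sum_m\mathbf 1_{C_m}=+\infty\}=\{\sum_m p_m=+\infty\}$ almost surely is exactly the assertion of the lemma, once we observe that $\{C_n\ \text{i.o.}\}=\{\sum_m\mathbf 1_{C_m}=+\infty\}$ and that $\sum_m p_m=\sum_m\mathbb P\{C_m|\mathcal F_{m-1}\}$.

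Next I would establish the convergence dichotomy for $S_n$: almost surely either (i) $\lim_n S_n$ exists and is finite, or (ii) $\limsup_n S_n=+\infty$ and $\liminf_n S_n=-\infty$. To get this I would localize with the stopping times $T_a=\inf\{n:S_n\le-a\}$, which are stopping times because $S_n$ is adapted. The stopped process $S_{n\wedge T_a}$ is a martingale bounded below by $-a-1$ (the last downward jump is at most $1$), so $S_{n\wedge T_a}+a+1$ is a nonnegative martingale and converges almost surely by the martingale convergence theorem. On $\{T_a=\infty\}$ this forces $\lim_n S_n$ to exist finitely; letting $a\to\infty$ shows $\lim_n S_n$ exists and is finite on $\{\inf_n S_n>-\infty\}$, and the symmetric argument applied to $-S_n$ covers $\{\sup_n S_n<+\infty\}$. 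Consequently, off these two events one has $\inf_n S_n=-\infty$ and $\sup_n S_n=+\infty$, which is case (ii); hence (i) and (ii) exhaust $\Omega$ up to a null set.

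Finally I would read off the identity on each piece. On the event in (i) the series $\sum_m(\mathbf 1_{C_m}-p_m)$ converges, and since both $\sum_m\mathbf 1_{C_m}$ and $\sum_m p_m$ have nonnegative terms, one diverges if and only if the other does. On the event in (ii) the bound $S_n\le\sum_{m\le n}\mathbf 1_{C_m}$ together with $\limsup_n S_n=+\infty$ gives $\sum_m\mathbf 1_{C_m}=+\infty$, while $S_n\ge-\sum_{m\le n}p_m$ together with $\liminf_n S_n=-\infty$ gives $\sum_m p_m=+\infty$, so both series diverge simultaneously. In either case the events $\{\sum_m\mathbf 1_{C_m}=+\infty\}$ and $\{\sum_m p_m=+\infty\}$ coincide, which is the claim.

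The main obstacle is the dichotomy in the second step: the substantive content of the lemma is the localization argument that upgrades boundedness of the increments to almost-sure control of $S_n$ via the martingale convergence theorem. The delicate point is case (ii), where $\lim_n S_n$ fails to exist and the conclusion must instead be extracted from the one-sided comparisons of $S_n$ with the two monotone partial sums. Everything else — the martingale property, the increment bound, and the final case analysis — is routine.
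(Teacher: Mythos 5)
Your proof is correct, and it is essentially the argument in the source the paper cites for this lemma: the paper states the Second Borel--Cantelli Lemma without proof, referring to Durrett, and the standard proof there is exactly your reduction — form the martingale $S_n=\sum_{m\le n}\big(\mathbf 1_{C_m}-\mathbb P\{C_m|\mathcal F_{m-1}\}\big)$ with increments bounded by $1$, establish the almost-sure dichotomy (finite limit versus $\limsup=+\infty$, $\liminf=-\infty$) by stopping and the martingale convergence theorem, and read off the equality of the two divergence events on each piece. There is nothing to correct or add.
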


\subsection{Stochastic matrices and graphs}

An $m\times m$ matrix $A=[a_{ij}]_{i,j=1}^{m}$ is said to be a {\it
stochastic matrix} if $a_{ij}\ge 0$ for all $i,j=\onetom$ and
$\sum_{j=1}^{m}a_{ij}=1$ for all $i=\onetom$. A matrix $A\in\mathbb
R^{m,m}$ is said to be {\it SIA} if $A$ is stochastic,
indecomposable, and aperiodic, i.e., $\lim_{n\to\infty}A^{n}$ converges to a matrix with identical rows.
The {\it Hajnal diameter} is introduced
in Ref.~\cite{Haj1,Haj2} to describe the compression rate of a
stochastic matrix. For a matrix $A$ with row vectors
$a_{1},\dots,a_{m}$ and a vector norm $\|\cdot\|$ in
$\mathbb{R}^{m}$, the Hajnal diameter of $A$ is defined by $ {\rm
diam}(A)=\max\limits_{i,j}\|a_{i}-a_{j}\|$. The {\it scramblingness}
$\eta$ of a stochastic matrix $A$ is defined as
\begin{eqnarray}
\eta(A)=\min\limits_{i,j}\|a_{i}\wedge a_{j}\|_{1},\label{scramble}
\end{eqnarray}
where $ a_{i}\wedge
a_{j}=[\min(a_{i1},a_{j1}),\dots,\min(a_{im},a_{jm})]$. The
stochastic matrix $A$ is said to be {\it scrambling} if $\eta(A)>0$.
The Hajnal inequality estimates the Hajnal diameter of the product
of stochastic matrices. For two stochastic matrices $A$ and $B$ of
the same order, the inequality
\begin{eqnarray}
{\rm diam}(AB)\le(1-\eta(A)){\rm diam}(B)\label{Hajnal_inequality}
\end{eqnarray}
holds for any matrix norm \cite{Shen}. It can be seen from (\ref{Hajnal_inequality}) that the diameter of the  product $AB$  is strictly less than that of $B$ if $A$ is scrambling.

The link between stochastic matrices and graphs is
an essential feature of this paper.
A stochastic (or simply nonnegative) matrix $A=[a_{ij}]_{i,j=1}^{m}\in\mathbb R^{m,m}$
defines a graph $\mathcal G=\{\mathcal V,\mathcal E\}$, where
$\mathcal V=\{\onetom\}$ denotes the {\it vertex set} with $m$
vertices and $\mathcal E$ denotes the {\it link set} where there
exists a directed link from vertex $j$ to $i$, i.e., $e(i,j)$
exists, if and only if $a_{ij}>0$. We denote this graph
corresponding to the stochastic matrix $A$ by $\mathcal G(A)$. For a
directed link $e(i,j)$, we say that $j$ is the {\it start} of the
link and $i$ is the {\it end} of the link. The vertex $i$ is said to
be self-linked if $e(i,i)$ exists, i.e., $a_{ii}>0$. $\mathcal G$ is said to be a {\it
bigraph} if the existences of $e(i,j)$ and $e(j,i)$ are equivalent.
Otherwise, $\mathcal G$ is said to a {\it digraph}. An $L$-length
{\it path} in the graph denotes a vertex sequence
$(v_{i})_{i=1}^{L}$ satisfying that the link $e(v_{i+1},v_{i})$
exists for all $i=1,\dots,L-1$. The vertex $i$ can {\it access} the
vertex $j$, or equivalently, the vertex $j$ is {\it accessible} from
the vertex $i$, if there exists a path from the vertex $i$ to $j$.
The graph $\mathcal G$ has a {\it spanning tree} if there exists a
vertex $i$ which can access all other vertices, and
the set of vertices that can access all other vertices is named the
{\it root set}. The graph $\mathcal G$ is said to be {\it strongly
connected} if each vertex is a root. We refer interested readers to the book \cite{God} for more details.
Due to the relationship between nonnegative matrices and graphs, we can call on the properties of nonnegative matrices, or equivalently, those of their corresponding graphs. For example, the indecomposability of a nonnegative matrix $A$ is equivalent to that $\mathcal G(A)$ has a spanning tree, and the aperiodicity of a graph is associated with the aperiodicity of its corresponding matrix \cite{Horn}.
We say that $\mathcal G$ is {\it
scrambling} if for each pair of vertices $i\ne j$, there exists a
vertex $k$ such that both $e(i,k)$ and $e(j,k)$ exist, which can be
seen to be equivalent to the definition of scramblingness for stochastic matrices. For two matrices
$A=[a_{ij}]_{i,j=1}^{n},B=[b_{ij}]_{i,j=1}^{n}\in\mathbb R^{n,n}$,
 we say $A$ is an {\it analog} of $B$ and write $A\approx B$,
in case that $a_{ij}\ne 0$ if and only if $b_{ij}\ne 0$,  $\forall i,j=\oneton$, that is,
when their corresponding graphs are identical.

Furthermore, for a nonnegative matrix $A$ and a given $\delta>0$,
the {\em $\delta$-matrix} of $A$, denoted by $A^{\delta}$, is
defined as
\begin{eqnarray*}
[A^{\delta}]_{ij}=\left\{
\begin{array}{ll}
\delta,& \mbox{if }A_{ij}\ge \delta;\\
0,&  \mbox{if }A_{ij}<\delta.
\end{array}\right.
\end{eqnarray*}
The {\em$\delta$-graph} of $A$ is the directed graph corresponding
to the $\delta$-matrix of $A$. We denote by $\mathcal
N_{i}^{\delta}$ the neighborhood set of the vertex $v_{i}$ in
the $\delta$-graph: $\mathcal{N}_{i}^{\delta}=\{v_{j}: A_{ij}\ge\delta\}$.

\subsection{Convergence of products of stochastic matrices}
Here, we provide the definition of consensus and synchronization of
the system (\ref{multi_delay1}). Suppose the delays are bounded, namely,
$\tau_{ij}(\sigma^{k})\le\tau_{M}$ for all $i,j=\onetom$ and
$\sigma^{k}\in\Omega$.

\begin{definition}
The multi-agent system is said to {\it reach consensus} via the
algorithm (\ref{multi_delay1}) if for any essentially
bounded random initial data $x^{0}_{\tau}\in\mathbb R^{m}$,
$\tau=0,1,\dots,\tau_{M}$,
(that is, $x^{0}_{\tau}$ is bounded with probability one),
and almost every sequence
$\{\sigma^{t}\}$, there exists a number $\alpha\in\mathbb R$ such
that $ \lim\limits_{t\rightarrow\infty}x^{t}=\alpha {\mathbf 1}$
with $\mathbf 1=[1,1,\dots,1]^{\top}$.
The multi-agent system is
said to {\it synchronize} via the algorithm (\ref{multi_delay1}) if
for any initial essentially bounded random $x^{0}\in\mathbb R^{m}$ and almost every sequence
$\{\sigma^{t}\}$ , $\lim_{t\rightarrow\infty}|x_{i}(t)-x_{j}(t)|=0,~i,j=\onetom$. In
particular, if for any initial essentially
bounded random $x^{0}_{\tau}\in\mathbb R^{m}$, $\tau=0,1,\dots,\tau_{M}$, and almost
every sequence, there exists a $P$-periodic trajectory $s(t)$ ($P$
independent of the initial values and the sequence) such that
$\lim_{t\to\infty}|x_{i}(t)-s(t)|=0$ holds for all $i=\onetom$, then
the multi-agent system is said to {\it synchronize to a $P$-periodic
trajectory} via the algorithm (\ref{multi_delay1}).
\end{definition}

In general, consensus can be regarded as a special case of
synchronization, where the multi-agent system synchronizes at an
equilibrium. As shown in Ref.~\cite{CS}, in the absence of delays,
consensus and synchronization are equivalent w.r.t. the
product of infinite stochastic matrices; that is, whenever a system
synchronizes, it also reaches consensus. However, we will show in
the following sections that, under transmission delays, consensus
and synchronization of the algorithm (\ref{multi_delay1}) are not
equivalent. Thus, a system can synchronize without necessarily
reaching consensus.

Consider the model where the topologies are induced by a stochastic process:
\begin{eqnarray}
x^{t+1}_{i}=\sum\limits_{j=1}^{m}G_{ij}(\xi^{t})x^{t}_{j},~i=\onetom,
\label{general}
\end{eqnarray}
where $\{\xi^{t}\}_{t\in\mathbb N}$ is a stochastic process with a
probability distribution of the sequence $\mathbb P$.
The results of this paper are based on the following lemma, which is
a consequence of  Theorem 2 in Ref.~\cite{CS}.

\begin{lemma}  \label{basic_lemma}
Let $\eta(\cdot)$ denote the scramblingness,
as defined in (\ref{scramble}). The multi-agent system via the algorithm
(\ref{general}) reaches consensus if and only if for $\mathbb
P$-almost every sequence there exist infinitely many disjoint integer
intervals $I_{i}=[a_{i},b_{i}]$ such that
\begin{eqnarray*}
\sum\limits_{i=1}^{\infty}\eta\bigg(\prod\limits_{k=a_{i}}^{b_{i}}G(\xi^{k})
\bigg)=\infty.
\end{eqnarray*}
\end{lemma}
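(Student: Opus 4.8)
Lemma~\ref{basic_lemma} concerns the delay-free system \eqref{general}, where $x^{t+1}=G(\xi^t)x^t$ is a genuine (backward) product of stochastic matrices with no memory. The paper already cites \cite{CS} as giving a necessary and sufficient condition for convergence of infinite stochastic matrix products via scramblingness, and Lemma~\ref{basic_lemma} is stated as a \emph{consequence} of that theorem. So the real work is to translate the convergence statement of \cite{CS} into the consensus language of the present Definition and to account for the almost-sure/randomness layer.

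\medskip

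\textbf{Sufficiency direction.} First I would show that the stated series condition forces consensus. The key tool is the Hajnal inequality \eqref{Hajnal_inequality}: for stochastic matrices, $\mathrm{diam}(AB)\le(1-\eta(A))\mathrm{diam}(B)$. Given $\mathbb P$-almost every sequence with disjoint intervals $I_i=[a_i,b_i]$ satisfying $\sum_i\eta\bigl(\prod_{k=a_i}^{b_i}G(\xi^k)\bigr)=\infty$, I would let $M_i=\prod_{k=a_i}^{b_i}G(\xi^k)$ and consider the full product up to time $t$. Writing the accumulated product as an alternating concatenation of the block matrices $M_i$ and the intervening (stochastic, hence diameter-nonincreasing) factors, each application of \eqref{Hajnal_inequality} contributes a factor $(1-\eta(M_i))$ to the Hajnal diameter. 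Hence
\begin{eqnarray*}
\mathrm{diam}\Bigl(\prod_{k=1}^{t}G(\xi^k)\Bigr)\le\prod_{i}(1-\eta(M_i))\cdot\mathrm{diam}(I_m),
\end{eqnarray*}
and the standard fact that $\sum_i\eta(M_i)=\infty$ with $\eta(M_i)\in[0,1]$ implies $\prod_i(1-\eta(M_i))=0$ drives the diameter to zero, so the rows of the product coalesce and $x^t\to\alpha\mathbf 1$. I would note that $\mathrm{diam}$ is a seminorm vanishing exactly on rank-one matrices with identical rows, which is precisely the consensus subspace spanned by $\mathbf 1$.

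\medskip

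\textbf{Necessity direction.} Conversely, I would argue that if no such intervals exist on a positive-probability set of sequences, then convergence to consensus fails. This is where Theorem~2 of \cite{CS} enters directly: that theorem characterizes convergence of the infinite product precisely by the existence of a scrambling partition into intervals whose scramblingness sums to infinity, so the failure of the series condition is equivalent to the product not converging to a consensus matrix. Since the initial data is arbitrary essentially bounded, non-convergence of the products translates into non-convergence of $x^t$ to a common value, contradicting consensus.

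\medskip

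\textbf{The main obstacle is the probabilistic bookkeeping rather than the matrix algebra.} The deterministic core is essentially \cite{CS} plus one telescoping application of \eqref{Hajnal_inequality}. The delicate part is handling the ``$\mathbb P$-almost every sequence'' quantifier correctly: I must ensure that the event on which the interval decomposition exists is measurable and has probability one, and that ``reaches consensus'' (which is an almost-sure statement over sequences \emph{and} over initial data) matches the pathwise convergence supplied by the product estimate. I expect the cleanest route is to fix the deterministic equivalence first (for each fixed sequence, the product converges to a rank-one consensus matrix iff the interval/scramblingness series diverges, citing \cite{CS}), and then quantify over sequences, so that the ``almost surely'' in the consensus definition and the ``$\mathbb P$-almost every sequence'' in the lemma are literally the same null set. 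The one subtlety to check is that an essentially bounded random initial condition does not interact badly with the sequence-dependent null set, but since the product estimate is uniform in the (bounded) initial data once the sequence is fixed, this causes no difficulty.
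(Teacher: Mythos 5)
Your proposal is correct and follows essentially the same route as the paper: the paper gives no separate proof of this lemma, deriving it directly as a consequence of Theorem 2 in \cite{CS}, which is exactly the result you invoke, and the Hajnal-inequality telescoping you spell out for sufficiency is the standard content behind that citation. The only detail worth adding is that, in the sufficiency direction, vanishing Hajnal diameter alone gives $|x_i^t-x_j^t|\to 0$; to get actual convergence $x^t\to\alpha\mathbf 1$ one should note that for backward products of stochastic matrices the convex hulls of the rows are nested, so the shrinking diameter forces the products (and hence $x^t$) to converge.
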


As a trivial extension to a set of SIA matrices, we have the next
lemma on how to obtain scramblingness.

\begin{lemma}\cite{Wolf}\label{SIA}
Let $\Theta\subset\mathbb R^{m,m}$ be a set of SIA matrices. There
exists an integer $N$ such that any $n$-length matrix sequence
with $n>N$ picked from $\Theta$: $G^{1},G^{2},\dots,G^{n}$
satisfies
\begin{eqnarray*}
\eta\bigg(\prod\limits_{k=1}^{n}G^{k}\bigg)>0.
\end{eqnarray*}
\end{lemma}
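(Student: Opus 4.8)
The plan is to split the claim into a routine \emph{persistence} step, which I can extract directly from the Hajnal inequality (\ref{Hajnal_inequality}), and a genuinely combinatorial \emph{uniform contraction} step, which is the real content of Wolfowitz's theorem \cite{Wolf}. I begin by recording, for any stochastic matrix $S$ with rows $s_{1},\dots,s_{m}$ and the $L^{1}$ norm, the identity
\begin{eqnarray*}
\eta(S)=\min_{i,j}\sum_{k}\min(s_{ik},s_{jk})=1-\tfrac{1}{2}\,{\rm diam}(S),
\end{eqnarray*}
which follows from $\sum_{k}\min(s_{ik},s_{jk})=1-\tfrac{1}{2}\|s_{i}-s_{j}\|_{1}$. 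Hence $S$ is scrambling precisely when ${\rm diam}(S)<2$, and since ${\rm diam}(S)\le 2$ always holds for stochastic $S$, everything reduces to forcing the diameter of long products strictly below $2$.

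First I would prove the persistence property: if $A$ is scrambling and $B$ is stochastic, then $AB$ is scrambling. This is immediate from (\ref{Hajnal_inequality}) with the $L^{1}$ norm, since ${\rm diam}(AB)\le(1-\eta(A)){\rm diam}(B)\le 2\,(1-\eta(A))<2$ because $\eta(A)>0$. The upshot is that a product is scrambling as soon as some \emph{left} block of it is scrambling. Thus, if I can exhibit an integer $N$ such that every product of exactly $N$ matrices from $\Theta$ is scrambling, then for any $n>N$ I factor
\begin{eqnarray*}
\prod_{k=1}^{n}G^{k}=\big(G^{n}\cdots G^{n-N+1}\big)\big(G^{n-N}\cdots G^{1}\big),
\end{eqnarray*}
note that the left factor is a length-$N$ product and hence scrambling, and conclude by persistence that the whole product is scrambling. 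This reduces Lemma \ref{SIA} to the existence of such a uniform $N$.

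The existence of this uniform $N$ is the main obstacle, and is exactly where the SIA hypothesis is essential. For a single $A\in\Theta$ it is easy: SIA means $A^{n}\to{\mathbf 1}v^{\top}$ for a probability vector $v$, so for every column $k$ in the support of $v$ one has $A^{n}_{ik}\to v_{k}>0$ for all $i$; thus large powers of $A$ possess a column that is positive in every row and are therefore scrambling. To make the threshold uniform over \emph{all} products, not merely powers of one matrix, I would follow Wolfowitz \cite{Wolf} and pass to the finite semigroup of zero--nonzero patterns generated by the patterns of the matrices in $\Theta$: since this Boolean semigroup is finite, long products land in its stable (idempotent) part after a bounded number of steps, and the assumption that every product of $\Theta$-matrices is again SIA forces these stable patterns to contain a column that is positive in every row, i.e.\ to be scrambling. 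Controlling this combinatorics uniformly, rather than termwise, is the crux; once it is in place, the two reductions of the previous paragraphs finish the proof.
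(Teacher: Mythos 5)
Your reduction steps are sound: the identity $\eta(S)=1-\tfrac12{\rm diam}(S)$ in the $L^{1}$ norm is correct, the persistence step (a scrambling left factor times a stochastic right factor is scrambling) follows from (\ref{Hajnal_inequality}) exactly as you say, and together they correctly reduce the lemma to exhibiting a uniform $N$ such that every length-$N$ product is scrambling. Note that the paper offers no proof of this lemma---it is quoted from \cite{Wolf}---so the comparison is against Wolfowitz's theorem, and that is where your argument has a genuine gap. In your final step you appeal to ``the assumption that every product of $\Theta$-matrices is again SIA.'' No such assumption appears in the statement you are proving: the lemma hypothesizes only that the \emph{individual} matrices in $\Theta$ are SIA. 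This closure property is not a harmless bookkeeping point, and it cannot be derived, because a product of SIA matrices need not be SIA; Wolfowitz's actual theorem carries precisely this extra hypothesis (every finite word in the given matrices is SIA), and the semigroup argument you sketch collapses without it.

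Concretely, read literally (as the paper states it, and as the paper later uses it with $\Theta$ effectively equal to the set of all SIA matrices), the lemma is false. Take $m=3$ and
\begin{eqnarray*}
A=\begin{pmatrix}1&0&0\\0&0&1\\ 1/2&1/2&0\end{pmatrix},\qquad
B=\begin{pmatrix}0&1&0\\0&1/2&1/2\\ 1&0&0\end{pmatrix}.
\end{eqnarray*}
Both are SIA: for $A$, state $1$ is absorbing and reached almost surely from the other states, so $A^{n}\to\mathbf 1 e_{1}^{\top}$; for $B$, the graph is strongly connected with a self-loop at vertex $2$. However,
\begin{eqnarray*}
AB=\begin{pmatrix}0&1&0\\ 1&0&0\\ 0&3/4&1/4\end{pmatrix}
\end{eqnarray*}
has the periodic recurrent class $\{1,2\}$: rows $1$ and $2$ of $(AB)^{k}$ equal $e_{1}$ and $e_{2}$ in alternating order, hence have disjoint supports, so $\eta\big((AB)^{k}\big)=0$ for every $k$. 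Thus for $\Theta=\{A,B\}$ the alternating sequences $B,A,B,A,\dots$ produce arbitrarily long products with zero scramblingness, and no $N$ as in the lemma exists. The repair---which is what the paper implicitly relies on in its applications, e.g.\ via Lemma \ref{spanning}, since stochastic matrices whose graphs have spanning trees with self-linked roots form a product-closed family of SIA matrices---is to assume that all finite products of matrices from $\Theta$ are SIA. Under that hypothesis your outline is the right one, though the crux (the finite pattern-semigroup argument) would still need to be written out rather than deferred; as it stands, you have proved a statement with a hypothesis you were not given.
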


The following result provides a relation between SIA matrices
and spanning trees.

\begin{lemma} (Lemma 1 in Ref. \cite{XW})\label{spanning}
If the graph corresponding to a stochastic matrix $A$ has a spanning
tree and a self-link at one of its root vertices, then $A$ is SIA.
\end{lemma}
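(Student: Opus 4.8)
The plan is to work directly from the definition of SIA given above, namely to show that $\lim_{n\to\infty}A^{n}$ exists and has identical rows; equivalently, that ${\rm diam}(A^{n})\to0$ together with genuine convergence of the entries. The strategy is to first exhibit a single power $A^{N}$ that is scrambling, and then to iterate the Hajnal inequality (\ref{Hajnal_inequality}) to force the Hajnal diameter to contract geometrically. The whole argument stays within the toolkit assembled in Sec.~2.2.

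First I would translate the graph hypotheses into statements about the entries of powers of $A$. Recalling the convention that the link $e(i,j)$ (from $j$ to $i$) exists iff $a_{ij}>0$, a directed path of length $L$ from the root $r$ to a vertex $v$ contributes a strictly positive product to $[A^{L}]_{vr}$, so that $[A^{L}]_{vr}>0$ for some length $L=L_{v}$. Since $r$ is a root, such a path exists for every vertex $v$, and hence every column entry $[A^{L_{v}}]_{vr}$ is positive --- but a priori at a vertex-dependent power $L_{v}$. The key step, and the place where the self-link hypothesis is used, is to synchronize these path lengths to a common exponent. Because $a_{rr}>0$, right-multiplication by $A$ preserves positivity of the $(v,r)$ entry, since $[A^{L+1}]_{vr}\ge[A^{L}]_{vr}\,a_{rr}>0$. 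Thus once $[A^{L}]_{vr}>0$ it remains positive for all larger exponents, and taking $N=\max_{v}L_{v}$ yields $[A^{N}]_{vr}>0$ for every $v$; that is, $A^{N}$ has a uniformly positive $r$-th column. Any stochastic matrix with a fully positive column is scrambling, since for any two rows $i,j$ the meet $a_{i}\wedge a_{j}$ is positive in coordinate $r$, whence $\eta(A^{N})>0$.

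Finally I would apply (\ref{Hajnal_inequality}) with the scrambling factor $A^{N}$ on the left, giving ${\rm diam}(A^{(k+1)N})\le(1-\eta(A^{N}))\,{\rm diam}(A^{kN})$ and hence ${\rm diam}(A^{kN})\le(1-\eta(A^{N}))^{k-1}{\rm diam}(A^{N})\to0$; the intermediate powers are controlled because left-multiplication by a stochastic matrix never increases the diameter. To upgrade diameter contraction to actual convergence, I would write row $i$ of $A^{(k+1)N}=A^{N}A^{kN}$ as a convex combination of the rows of $A^{kN}$, so that successive iterates differ in each row by at most ${\rm diam}(A^{kN})$, a geometric sequence; hence $\{A^{kN}\}$ is Cauchy and converges to a matrix $Q$, which has identical rows because ${\rm diam}(A^{kN})\to0$. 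Since $A^{kN+s}=A^{s}A^{kN}\to A^{s}Q=Q$ for each fixed $s$ (using $A^{s}Q=Q$ for a matrix $Q$ with identical rows), the full sequence $A^{n}$ converges to $Q$, establishing that $A$ is SIA.

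I expect the main obstacle to be the synchronization step: the spanning-tree hypothesis alone only produces positive column entries at a vertex-dependent exponent, and it is precisely the self-link at a root that permits padding all of them up to a common power $N$ and thereby produces a genuinely scrambling matrix rather than merely an indecomposable one. A secondary technical point is passing from ${\rm diam}(A^{n})\to0$ to honest convergence of $A^{n}$, which the geometric decay renders routine through the Cauchy estimate above.
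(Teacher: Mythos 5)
Your proof is correct. Note that the paper itself offers no proof of this lemma: it is imported as Lemma 1 of Ref.~\cite{XW} and used as a black box in the proofs of Theorems \ref{thm2} and \ref{thm3}, so there is no internal argument to compare against; what you have written is a self-contained replacement for that citation, built entirely from the toolkit of Sec.~2.2. The two steps carrying the real content are both handled correctly: first, the padding step $[A^{L+1}]_{vr}\ge [A^{L}]_{vr}\,a_{rr}>0$, which is exactly where the self-link at the root enters --- the spanning tree alone only yields positivity of $[A^{L_v}]_{vr}$ at vertex-dependent exponents $L_v$, and without the self-link the conclusion genuinely fails (a directed cycle is stochastic with a strongly connected graph, yet its powers never converge); this produces a common power $N$ with a strictly positive $r$-th column, hence $\eta(A^{N})>0$ by (\ref{scramble}). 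Second, the upgrade from ${\rm diam}(A^{kN})\to 0$ (via iterating (\ref{Hajnal_inequality}) and the fact that left multiplication by a stochastic matrix does not increase the diameter) to honest convergence of the full sequence $A^{n}$, which you obtain from the convex-combination estimate $\|\mathrm{row}_i(A^{(k+1)N})-\mathrm{row}_i(A^{kN})\|\le {\rm diam}(A^{kN})$, the resulting Cauchy property, and the identity $A^{s}Q=Q$ for a matrix $Q$ with identical rows; this matches the paper's stated definition of SIA. One cosmetic remark: the paper's convention is that an ``$L$-length path'' has $L$ vertices (hence $L-1$ edges), whereas you count edges; this mismatch is harmless and affects nothing in the argument.
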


\section{Main results}
We first consider the multi-agent network without transmission delays:
\begin{eqnarray}
x^{t+1}_{i}=\sum\limits_{j=1}^{n}G_{ij}(\sigma^{t})x^{t}_{j},~i=\onetom.
\label{multi_undelay}
\end{eqnarray}
The following theorem is the main tool for the proofs of the main
results and it can be regarded as a realization of Lemma
\ref{basic_lemma} and an extension from Ref. \cite{LLC} without
assuming self-links.
\begin{theorem}\label{basic_thm}
For the system (\ref{multi_undelay}), if there exist $L\in\mathbb N$
and $\delta>0$ such that  the $\delta$-graph of the matrix
product
\begin{eqnarray}
\mathbb E\bigg\{\prod_{k=n+1}^{n+L}G(\sigma^{k})|\mathcal
F^{n}\bigg\}
\end{eqnarray}
has a spanning tree and is aperiodic for all $n\in\mathbb N$ almost surely, then the multi-agent
system reaches a consensus.
\end{theorem}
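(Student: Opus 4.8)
The plan is to apply Lemma~\ref{basic_lemma} and reduce consensus to producing, along almost every sample path, infinitely many disjoint blocks on which the realized product of coupling matrices is scrambling with scramblingness bounded below by a fixed positive constant. Concretely, I would partition time into consecutive blocks of length $KL$ (with $K$ chosen below), write $\Phi(a,b)=\prod_{k=a}^{b}G(\sigma^{k})$ for the realized left product, and for the $n$-th block introduce the event
\[
C_{n}=\Big\{\eta\big(\Phi(nKL+1,(n+1)KL)\big)\ge c\Big\}
\]
for a constant $c>0$ fixed later. If I can show $\mathbb P\{C_{n}\mid\mathcal F^{nKL}\}\ge p$ for some $p>0$ uniform in $n$ and in the past, then the Second Borel--Cantelli Lemma (Lemma~\ref{lemSecondBorelCantelli}) yields $\sum_{n}\mathbb P\{C_{n}\mid\mathcal F^{nKL}\}=+\infty$, hence $C_{n}$ infinitely often almost surely; each occurrence contributes at least $c$ to the series in Lemma~\ref{basic_lemma}, so that series diverges and the system reaches consensus.

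The core is the uniform lower bound on $\mathbb P\{C_{n}\mid\mathcal F^{nKL}\}$, and here the hypothesis enters. First I would fix the number of sub-blocks $K$. Since there are only finitely many directed graphs on $m$ vertices, the $\delta$-graph of $H_{r}:=\mathbb E\{\prod_{k=r+1}^{r+L}G(\sigma^{k})\mid\mathcal F^{r}\}$ ranges, as $r$ and the sample vary, over a finite family of graphs, each having a spanning tree and being aperiodic by assumption. By Lemma~\ref{spanning} the stochastic matrices realizing these graphs are SIA, so Lemma~\ref{SIA} supplies an integer $N$ such that any composition of more than $N$ of them is scrambling; I fix $K=N+1$ and read each length-$KL$ block as $K$ consecutive length-$L$ sub-blocks. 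Second, to pass from the conditional expectation to the realization, I would use a one-entry reverse-Markov estimate: since each sub-block product has entries in $[0,1]$ and conditional mean $H_{r}$, every $\delta$-edge $(p,q)$ of $H_{r}$ satisfies $\mathbb P\{[\Phi(r+1,r+L)]_{pq}\ge\delta/2\mid\mathcal F^{r}\}\ge\delta/2$. Chaining these estimates across the $K$ sub-blocks by the tower property, and using that the hypothesis holds for every $r$ almost surely so that the estimate is valid regardless of the realized past, produces a uniformly positive conditional probability that the realized length-$KL$ product dominates $c\,M$ entrywise, where $M$ is a fixed $0/1$ matrix whose graph is the scrambling composition from the first step and $c=(\delta/2)^{K}$.

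On that event the domination $\Phi\ge cM$ forces $\eta(\Phi)\ge c$: for every pair of rows the scrambling pattern of $M$ provides a common column in which both entries of $\Phi$ are at least $c$, so $\|a_{i}\wedge a_{j}\|_{1}\ge c$. This is exactly the event $C_{n}$ with the constant $c$ above, completing the uniform bound and hence, via Borel--Cantelli and Lemma~\ref{basic_lemma}, the proof of Theorem~\ref{basic_thm}.

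The step I expect to be the real obstacle is the transfer in the second paragraph from the connectivity of the conditional expectation to scramblingness of an actual realization. The difficulty is that $H_{r}$ is an average, so its $\delta$-graph is a union-type object that can be far better connected than any single realized sub-block product; no individual sub-block need reproduce the spanning tree, and demanding that it do so may have probability zero. The resolution I would pursue is not to require one sub-block to carry the whole $\delta$-graph, but to let the required edges accumulate across the $K$ sub-blocks and combine them through the composition, invoking the aperiodicity (and, in the applications, the retained self-link) structure that lets edges realized in different sub-blocks compose into the scrambling pattern of $M$. Making this accumulation uniform in $n$ and in the past, while keeping the number of sub-blocks bounded by the $N$ from Lemma~\ref{SIA}, is the delicate technical heart of the theorem.
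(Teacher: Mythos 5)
Your overall skeleton matches the paper's: reduce to Lemma~\ref{basic_lemma}, get scramblingness of long products via Wolfowitz's lemma (Lemma~\ref{SIA}), obtain a uniform positive conditional probability for a scrambling event on each block, and finish with the Second Borel--Cantelli Lemma. But the step you yourself flag as ``the delicate technical heart'' is a genuine gap, and your sketch of it does not work. The per-edge reverse-Markov estimate controls only one entry at a time: for each $\delta$-edge $(p,q)$ of $H_r$ you get $\mathbb P\{[\Phi(r+1,r+L)]_{pq}\ge\delta/2\mid\mathcal F^r\}\ge\delta/2$, but these events for different edges can be mutually exclusive (this is exactly the ``union-type object'' problem you describe), so with positive conditional probability you can force only \emph{one chosen edge per sub-block}. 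Chaining over the $K$ sub-blocks therefore yields entrywise domination of the realized product by $c$ times a product of single-edge $0/1$ matrices --- which is generically zero and never scrambling --- not by $c\,M$ with $M$ the composition of the full $\delta$-graphs. Your proposed repair, letting edges ``accumulate across the $K$ sub-blocks and combine through the composition,'' founders on the fact that matrix products compose paths rather than take unions of edges: an edge realized in one sub-block persists into the product only if it can be prolonged by links (e.g.\ self-links) in the other factors, and Theorem~\ref{basic_thm} assumes no self-links --- the paper states it explicitly as an extension of \cite{LLC} \emph{without} assuming self-links.

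The paper closes this hole differently. It first uses the filtration/tower property to show that the conditional expectation of the product over $NL$ steps, $\mathbb E\{\prod_{t=n+1}^{n+NL}G(\sigma^t)\mid\mathcal F^n\}$, has a scrambling $\delta_1$-graph (Wolfowitz is applied at the level of the nested conditional expectations, not to realized sub-block products), and then invokes Lemma 3.12 of \cite{LLC}, which is precisely the nontrivial transfer result you are missing: scramblingness of the conditional expectation implies $\mathbb P\{\eta(\prod_{t=n+1}^{n+M_1NL}G(\sigma^t))>\delta'\mid\mathcal F^n\}>\delta'$ for some $\delta'>0$ and $M_1\in\mathbb N$, uniformly in $n$ and in the past. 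Without citing that lemma or reproving its content, your argument is incomplete at its central step. A minor additional point: you invoke Lemma~\ref{spanning} to conclude that the relevant matrices are SIA, but that lemma requires a self-link at a root vertex, which is not among the hypotheses here; SIA follows instead directly from the definition, since having a spanning tree is equivalent to indecomposability and aperiodicity is assumed.
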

The proof is given in Sec.~5.1. The main result of \cite{LLC} can
be regarded as a consequence of Theorem \ref{basic_thm},
where each node in the graph was assumed to have a self-link. In the following, we
first study the multi-agent systems with transmission delays such
that each agent is linked to itself without delay and then
investigate the general situation where delays may occur also on the
self-links. Finally, we give a brief discussion on the consensus
algorithms without self-links. All proofs in this section are placed
in Sec.~5.

\subsection{Consensus and synchronization with transmission delays}
Consider the consensus algorithm (\ref{delayed}), which we rewrite in
matrix form as
\begin{eqnarray}
x^{t+1}=\sum\limits_{\tau=0}^{\tau_{M}}G^{\tau}(\sigma^{t})x^{t-\tau},
\label{delayed_matrix}
\end{eqnarray}
where
$G(\sigma^{t})=[G_{ij}^{\tau}(\sigma^t)]_{i,j=1}^{n}$.
We assume the following for the matrices
$G^{\tau}(\cdot)$.

$\mathbf A$: Each $G^{\tau}(\sigma^{t})$,
$\tau\in\underline{\tau_{M}}$, is a measurable map from $\Omega$ to
the set of nonnegative matrices with respect to $\mathcal F^{t}$.

Letting
$y^{t}=[{x^{t}}^{\top},{x^{t-1}}^{\top},\dots,{x^{t-\tau_{M}}}^{\top}]^{\top}
\in\mathbb
R^{m\times(\tau_{M}+1)}$, we can write (\ref{delayed_matrix}) as
\begin{eqnarray}
y^{t+1}=B(\sigma^{t})y^{t},\label{integ}
\end{eqnarray}
where $B(\sigma^{t})\in\mathbb R^{(\tau_{M}+1)\times
m,(\tau_{M}+1)\times m}$ has the form
\begin{eqnarray*}
B(\sigma^{t})=\left[\begin{array}{lllll}
G^{0}(\sigma^{t})&G^{1}(\sigma^{t})&\cdots&G^{\tau_{M}-1}(\sigma^{t})
&G^{\tau_{M}}\\
I_{m}&0&\cdots&0&0\\
0&I_{m}&\cdots&0&0\\
\vdots&\vdots&\ddots&\vdots&\vdots\\
0&0&\cdots&I_{m}&0\end{array}\right].
\end{eqnarray*}
Thus, the consensus of (\ref{delayed}) is equivalent to that of
(\ref{integ}). As a default labeling, let us consider the corresponding graph $\mathcal G(B(\sigma^{t}))$,
which has $(\tau_{M}+1)m$ vertices, which we denote by
$\{v_{i,j},~i\in\underline{\tau_{M}+1},~j\in\underline{m}\}$, where
$v_{i,j}$ corresponds to the ($(i-1)m+j$)th row (or column) of the matrix
$B(\sigma^{t})$.

\begin{theorem}\label{thm2}
Assume the conditions  $\mathbf A$, and suppose there
exist $\mu>0$, $L\in\mathbb N$, and $\delta>0$ such that $G^{0}(\sigma)
>\mu I_{m}$ for all
$\sigma\in\Omega$ and the $\delta$-graph of $\mathbb
E\{\sum_{k=n+1}^{n+L}G(\sigma^{k})|\mathcal F^{n}\}$ has a spanning
tree for all $n\in\mathbb N$ almost surely. Then the delayed
multi-agent system (\ref{delayed}) reaches consensus.
\end{theorem}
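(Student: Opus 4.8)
The plan is to recast the delayed recursion (\ref{delayed}) as the undelayed system (\ref{integ}), $y^{t+1}=B(\sigma^t)y^t$, and to apply Theorem \ref{basic_thm} to the matrices $B(\sigma^t)$. First I would check that each $B(\sigma^t)$ is stochastic: its first block-row sums to the row sums of $G^0+\cdots+G^{\tau_M}=G(\sigma^t)$, which are $1$ by hypothesis, while every other block-row contains a single identity block and hence also sums to $1$. Consensus of $y^t$ (all $(\tau_M+1)m$ coordinates tending to a common $\alpha$) is equivalent to consensus of $x^t$, so it suffices to produce $L'\in\mathbb N$ and $\delta'>0$ such that, almost surely and for every $n$, the $\delta'$-graph of $\mathbb E\{\prod_{k=n+1}^{n+L'}B(\sigma^k)\mid\mathcal F^n\}$ has a spanning tree and is aperiodic; Theorem \ref{basic_thm} then closes the argument.

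Next I would isolate the two deterministic (randomness-independent) features of $\mathcal G(B(\sigma^t))$. The block $G^0>\mu I_m$ forces a self-link at every layer-$1$ vertex $v_{1,j}$ of weight $>\mu$, and the identity blocks force the \emph{shift} links $v_{i,j}\to v_{i+1,j}$ for all $i\le\tau_M$. These have two payoffs. First, since the self-links persist through products, $\mathbb E\{[\prod_{k=n+1}^{n+L'}B]_{v_{1,j},v_{1,j}}\mid\mathcal F^n\}\ge\mu^{L'}$ deterministically, so fixing $\delta'\le\mu^{L'}$ keeps every root self-loop in the $\delta'$-graph; combined with a spanning tree this yields SIA, hence aperiodicity, via Lemma \ref{spanning}. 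Second, once a vertex $v_{1,j}$ is reached from the root, the shift links reach $v_{2,j},\dots,v_{\tau_M+1,j}$ within $\tau_M$ further steps. Thus verifying the spanning-tree property reduces to showing that a single layer-$1$ vertex $v_{1,j^\ast}$ reaches every other layer-$1$ vertex in the $\delta'$-graph of the expected product.

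The core step is to turn the agent-level hypothesis on $\mathbb E\{\sum_{k=n+1}^{n+L}G(\sigma^k)\mid\mathcal F^n\}$ into this layer-$1$ reachability for $B$. Working in the time-expanded (unrolled) graph over the window, an agent-level edge $q\to p$ carried by delay $\tau$ at some time $k$ corresponds to the route $v_{1,q}\rightsquigarrow v_{\tau+1,q}\to v_{1,p}$, i.e.\ $\tau$ shift steps (each deterministic) followed by one firing of the block $G^\tau(\sigma^k)$; while waiting for the correct firing time one parks at $v_{1,q}$ using its self-loop. I would take $L'$ a suitable multiple of $L+\tau_M$ so that an entire root-to-$j$ path of the agent-level spanning tree, with up to $\tau_M$ shift steps inserted before each of its at most $m-1$ firings, fits inside the window. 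Lower-bounding $\mathbb E\{[\prod B]_{v_{1,j},v_{1,j^\ast}}\mid\mathcal F^n\}$ by the conditional expectation of one such route (whose firings are chosen adaptively, not fixed in advance), the deterministic factors contribute powers of $\mu$ and $1$, while the hypothesis supplies, via $[\mathbb E\{\sum_k G\mid\mathcal F^n\}]_{p,q}\ge\delta$, a time $k$ and a delay $\tau$ with $\mathbb E\{[G^\tau(\sigma^k)]_{p,q}\mid\cdot\}\ge\delta/(L(\tau_M+1))$ for each edge, so that $\delta'$ may be taken as a fixed power of $\mu$, $\delta$ and $1/(L(\tau_M+1))$.

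The hard part is precisely this lower bound, because the conditional expectation of the product is not the product of the conditional expectations: the firing entries sit at distinct random times, are correlated both with one another and with the conditioning $\sigma$-algebra, and the agent-level bound is stated for the sum over a window conditioned at its start, not for a single entry at a single time. I would resolve this by ordering the firings in strictly increasing time (the self-loops absorb the resulting timing slack), peeling conditional expectations from the latest firing inward with the tower property, and re-invoking the ``for every $n$, almost surely'' hypothesis at each intermediate conditioning $\mathcal F^{k}$ to bound the next firing from below; this block-by-block realization, rather than a single pre-committed path, is what makes the estimate go through, and is where most of the technical care is needed (including that the agent-level root $j^\ast$ may depend on $n$ and on the window). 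Once the $\delta'$-graph of the expected product is shown to have a spanning tree rooted at $v_{1,j^\ast}$ together with its persistent self-loop, Lemma \ref{spanning} supplies aperiodicity and Theorem \ref{basic_thm} yields consensus of $y^t$, hence of $x^t$.
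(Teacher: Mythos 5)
Your overall skeleton is the paper's: rewrite (\ref{delayed}) as $y^{t+1}=B(\sigma^t)y^t$, exploit the two deterministic features of $B$ (parking self-loops at layer-$1$ vertices from $G^0>\mu I_m$, shift links $v_{i,j}\to v_{i+1,j}$ from the identity blocks), lift agent-level edges by the park--climb--fire route, and finish with Lemma \ref{spanning} and Theorem \ref{basic_thm}; this is exactly the role of Lemma \ref{lem_con_matrix} in the paper. The gap is in how you execute the lifting. You correctly reduce to showing that some $v_{1,j^*}$ \emph{reaches} every $v_{1,j}$ in the $\delta'$-graph of $\mathbb E\{\prod_{k=n+1}^{n+L'}B(\sigma^k)\mid\mathcal F^n\}$, but you then try to prove this by lower-bounding the single entry $[\mathbb E\{\prod B\mid\mathcal F^n\}]_{v_{1,j},v_{1,j^*}}$, routing an entire root-to-$j$ path temporally through successive sub-windows, one firing per sub-window. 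That entry bound is not merely hard; it is false. The hypothesis furnishes a \emph{different} random spanning tree, with a possibly different root, in each sub-window, and the edge your path needs next may never appear again. Concretely, take $m=3$, $\tau_M=0$, $L=1$, deterministic switching: at $t=1$ the graph is $\{3\to1,\ 1\to2\}$ plus self-loops (tree rooted at $j^*=3$), and for every $t\ge2$ it is $\{2\to1,\ 2\to3\}$ plus self-loops (tree rooted at $2$). All hypotheses of Theorem \ref{thm2} hold and consensus occurs, yet $[G(\sigma^T)\cdots G(\sigma^1)]_{2,3}=0$ for every $T$: starting from vertex $3$, the only moves at time $1$ lead into $\{1,3\}$, and for $t\ge 2$ no edge leaves $\{1,3\}$. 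So the quantity you propose to bound from below is identically zero, and no ordering of firings, adaptive choice, or tower-property peeling can rescue it (reversing the example likewise kills rooting at the \emph{last} window's tree; the root of a product depends on the whole sequence). Note that reachability itself survives in this example---$3\to1$ and $1\to2$ are both edges of the graph of the product---which is precisely the point: reachability must be established edge by edge inside one fixed graph, not entry by entry across time.

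That is what the paper does, and it is simpler than your core step. One may assume $L\ge\tau_M+1$ (enlarging $L$ preserves the hypothesis, since the $G^\tau$ are nonnegative). Lemma \ref{lem_con_matrix} is a \emph{pathwise} inequality, valid for every realization of the word: (i) $[B(\sigma)]_{i,1}\ge\mu_1^{L}I_m$ and (ii) $\sum_{j}[B(\sigma)]_{1,j}\ge\mu_1^{L}\sum_{k}G(\sigma^{k})$. Monotonicity of conditional expectation then converts the hypothesis on $\mathbb E\{\sum_{k}G(\sigma^{k})\mid\mathcal F^n\}$ directly into edges of the graph of the \emph{single-window} expected product $\mathbb E\{\prod_{t=n+1}^{n+L}B(\sigma^t)\mid\mathcal F^n\}$: each agent-level edge $q\to p$ of the $\delta$-graph becomes a two-step path $v_{1,q}\to v_{l,q}\to v_{1,p}$ (for some layer $l$) in that one graph, with item (i) supplying the edge $v_{1,q}\to v_{l,q}$ and the self-loops. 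Since all these lifted edges live in the same graph, the hypothesis's spanning tree composes at the graph level with no temporal constraint, yielding a spanning tree rooted at a self-linked layer-$1$ vertex; Lemma \ref{spanning} gives SIA, and Theorem \ref{basic_thm} does the rest. The cross-window composition that defeats your entry estimate---and that you correctly identified as the hard part---is handled once and for all inside the proof of Theorem \ref{basic_thm}, via Wolfowitz's lemma (Lemma \ref{SIA}), scramblingness, and the second Borel--Cantelli lemma, rather than by tracking a root. If you replace your multi-window entry estimate with this one-window pathwise inequality plus monotonicity, your argument becomes the paper's proof.
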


The proof is given in Sec~5.2. In the case that the topological switching is deterministic, a similar result is obtained in the literature \cite{Mor1,XW}.

\begin{example}
We give a simple example to illustrate Theorem \ref{thm2}. Consider
a delayed multi-agent system on a network with $2$ vertices and the
maximum delay is $1$. The system can be written as
\begin{eqnarray*}
x^{t+1}=G^{0}(\sigma^{t})x^{t}+G^{1}(\sigma^{t})x^{t-1},
\end{eqnarray*}
which can further be put into a form without delays
$y^{t+1}=B(\sigma^{t})y^{t}$ with
\begin{eqnarray*}
B(\sigma^{t})=\left(\begin{array}{ll}
G^{0}(\sigma^{t})&G^{1}(\sigma^{t})\\
I_{m}&0\end{array}\right).
\end{eqnarray*} Let us consider the product of two matrices
$B^{1}$ and $B^{2}$:
\begin{eqnarray*}
B^{1}=\left(\begin{array}{llll}1&0&0&0\\
0&1&0&0\\
1&0&0&0\\
0&1&0&0\end{array}\right),\quad B^{2}=\left(\begin{array}{llll}1/2&0&0&1/2\\
0&1&0&0\\
1&0&0&0\\
0&1&0&0\end{array}\right).
\end{eqnarray*}
In the absence of delays, they correspond to
$G_{1}=\left(\begin{array}{ll}1&0\\0&1\end{array}\right)$ and
$G_{2}=\left(\begin{array}{ll}1/2&1/2\\0&1\end{array}\right)$. One
can see that the union of the graphs $\mathcal G(G_{1})$ and
$\mathcal G(G_{2})$ has spanning trees and self-connections. Then
the proof of Theorem \ref{thm2} says that for some integer $L$, the
product of $L$ successive matrices corresponds to a graph which has a
spanning tree and a self-link on the root node. For example, we
consider the following matrix product:
\begin{eqnarray*}
B^{1}B^{2}=\left(\begin{array}{llll}1/2&0&0&1/2\\
0&1&0&0\\
1/2&0&0&1/2\\
0&1&0&0\end{array}\right).
\end{eqnarray*}
The corresponding graph has four vertices, which we label as $v_{1,1}$, $v_{1,2}$, $v_{2,1}$, and $v_{2,2}$
following the scheme defined below Eq.~(\ref{integ}). From Figure~\ref{fig0},
it can be seen that the graph corresponding to $B^{1}B^{2}$ has
spanning trees with $v_{1,2}$ being the root vertex which has a
self-link.
So, by Theorem \ref{thm2}, the system reaches consensus.
\begin{figure}[htp]
\begin{center}
\includegraphics[width=2.4in]{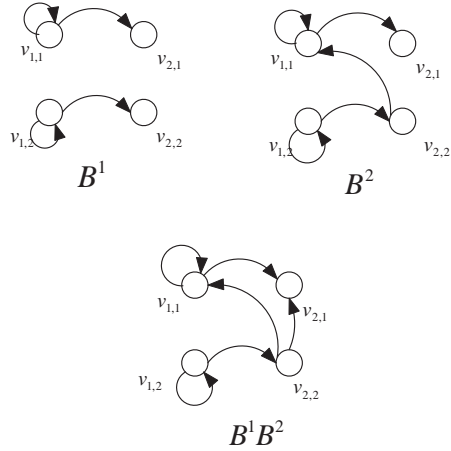}
\caption{The graphs
corresponding to the matrices$B^{1}$, $B^{2}$, and the matrix product $B^{1}B^{2}$, respectively.}\label{fig0}
\end{center}
\end{figure}
\end{example}

In some cases delays occur at self-links, for example, when it takes
time for each agent to process its own information. Suppose that the
self-linking delay for each vertex is identical, that is,
$\tau_{ii}=\tau_{0}>0$. We classify each integer $t$ in the
discrete-time set $\mathbb N$ (or the integer set $\mathbb Z$) via
$\mod(t+1,\tau_{0}+1)$ as the quotient group of $(\mathbb
Z+1)/(\tau_{0}+1)$. As a default set-up, we denote $\langle
i\rangle_{\tau_{0}+1}$ by $\langle i\rangle$. Let
$\hat{G}^{i}(\cdot)=\sum_{j\in \langle i\rangle}G^{j}(\cdot)$. For a
simplified statement of the result, we provide the following
condition \textbf{B}:

{\em \begin{enumerate}

\item[\textbf{B}.1] There exist an integer $\tau_{0}>0$ and a
number $\mu>0$ such that $G^{\tau_{0}}(\sigma_{1})>\mu I_{m}$ for
all $\sigma_{1}\in\Omega$;

\item[\textbf{B}.2] There exist
$\tau_{1},\dots,\tau_{K}$ excluding the integers in $\langle 0\rangle$ with
$\mbox{gcd}(\tau_{0}+1, \tau_{1}+1,\dots,\tau_{K}+1)=P>1$ such that
$\hat{G}^{j}(\sigma_{1})=0$ for all
$j\notin\{\tau_{1},\dots,\tau_{K}\}$ and all $\sigma_{1}\in\Omega$
and the $\delta$-matrix of $\mathbb
E\{\hat{G}^{\tau_{k}}(\sigma^{n+1}) |\mathcal F^{n}\}$ is nonzero
for all $n\in\mathbb N$ and $k=\onetoK$ almost surely.
\end{enumerate}}

\begin{theorem}\label{thm3} Assume that the conditions $\mathbf
A$ and $\mathbf B$ hold, and suppose there exist $L\in\mathbb N$ and $\delta>0$
such that the $\delta$-graph of $\mathbb
E\{\sum_{k=n+1}^{n+L}\hat{G}^{0}(\sigma^{k}) |\mathcal F^{n}\}$ is
strongly connected for all $n\in\mathbb N$ almost surely. Then the
system (\ref{delayed}) synchronizes to a $P$-periodic trajectory. In
particular, if $P=1$, then (\ref{delayed})
reaches consensus.
\end{theorem}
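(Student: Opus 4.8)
The plan is to exploit the arithmetic in condition $\mathbf B$ to \emph{decouple} the scalar recursion (\ref{delayed}) into $P$ independent subsystems, each of which is an ordinary (aperiodic) delayed consensus problem, and then to push each subsystem through the companion-form analysis behind Theorem~\ref{basic_thm}. The starting observation is that, by $\mathbf B$, every delay $\tau$ that actually occurs (the self-delay $\tau_0$ and the delays in the classes $\langle\tau_1\rangle,\dots,\langle\tau_K\rangle$) satisfies $P\mid(\tau+1)$, since $P=\mbox{gcd}(\tau_0+1,\tau_1+1,\dots,\tau_K+1)$. Consequently the update $x^{t+1}_i=\sum_{\tau}\sum_j G^\tau_{ij}(\sigma^t)x^{t-\tau}_j$ only ever couples indices with $t+1\equiv t-\tau \pmod P$, so for each residue $r\in\{0,\dots,P-1\}$ the subsequence $(x^{r+nP})_{n}$ obeys a \emph{closed} recursion that does not see the other residue classes. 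I would first make this decoupling explicit and note that it inherits the adapted structure $\mathbf A$ and the filtration $\{\mathcal F^t\}$.

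Next I would treat a single residue class. Folding time by $P$ turns its recursion into a delayed consensus system whose reduced jumps $(\tau+1)/P$ have greatest common divisor $1$; this is exactly the $P=1$ situation, so it suffices to prove consensus there and then re-assemble. For the $P=1$ (per-class) problem I would lift to the companion form (\ref{integ}), $y^{t+1}=B(\sigma^t)y^t$, with $B$ stochastic, and analyse the graph $\mathcal G(\prod_k B(\sigma^k))$ of a product over a sufficiently long window. The persistent self-coupling $G^{\tau_0}>\mu I_m$ guarantees a loop at every agent, and the gcd$\,=1$ property of the reduced jumps guarantees (by a numerical-semigroup argument: all sufficiently large integers are non-negative combinations of the jumps) that some power of the companion graph carries an honest self-link at a chosen root; meanwhile the assumed strong connectivity of the $\delta$-graph of the conditionally-averaged coupling (the hypothesis stated for $\hat G^0$) threads that root to every agent, producing a spanning tree. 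By Lemma~\ref{spanning} the corresponding conditional-expectation matrix is then SIA, and by Lemma~\ref{SIA} long products are scrambling.

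With scramblingness in hand I would run exactly the Borel--Cantelli mechanism used in the proof of Theorem~\ref{basic_thm}: the hypothesis that the relevant $\delta$-graph has the required structure \emph{for all $n$ almost surely} makes the conditional probability of encountering a scrambling block uniformly positive, so Lemma~\ref{lemSecondBorelCantelli} forces infinitely many disjoint scrambling windows, hence $\sum_i\eta(\prod_{I_i}B)=\infty$ along $\mathbb P$-almost every sequence, and Lemma~\ref{basic_lemma} yields consensus within the class. Doing this for every $r$ gives $P$ (generically distinct) class limits; defining $s$ on $\{0,\dots,P-1\}$ by these limits and extending it $P$-periodically shows $\lim_{t}|x_i(t)-s(t)|=0$, i.e.\ synchronization to a $P$-periodic trajectory, and when $P=1$ the single class gives genuine consensus.

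I expect the decisive difficulty to be the $P=1$ base case, specifically the claim that a product of the companion matrices $B$ has a graph with a spanning tree \emph{and} a self-link at one of its roots. Producing the self-link at the root is where the gcd condition must be converted, via a numerical-semigroup/closed-walk argument, into the existence of a return walk of the right length, and this has to be synchronised with the spatial reachability coming from the strong-connectivity hypothesis so that Lemma~\ref{spanning} genuinely applies to one and the same product. A secondary (but routine) point is checking that the folding by $P$ preserves measurability with respect to $\{\mathcal F^t\}$ and the conditional-expectation form of the hypotheses, so that Theorem~\ref{basic_thm} transfers to each folded subsystem verbatim.
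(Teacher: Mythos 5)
Your proposal is correct and takes essentially the same route as the paper: your mod-$P$ time-domain decoupling (from the observation that $P \mid (\tau+1)$ for every occurring delay) is exactly the paper's block-diagonalization (\ref{reorder}) of $B(\sigma^t)$ by the permutation $Q$, and your per-class argument --- closed walks generated by the self-delay producing a self-link at a root, strong connectivity of $\hat{G}^0$ producing a spanning tree, then Lemma \ref{spanning}, Lemma \ref{SIA}, and the Borel--Cantelli/scrambling machinery behind Theorem \ref{basic_thm} applied to each block $\hat{B}_p$ --- is precisely the paper's Claim 1 and its conclusion. The closed-walk/numerical-semigroup details you flag as the decisive difficulty are exactly what the paper's Lemmas \ref{lem_syn_m1} and \ref{lem_syn_m2} carry out by explicit path constructions through the companion matrix structure.
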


The proof is given in Sec.~5.3. From this theorem, one can
see that under self-linking delays, consensus is not equivalent to
synchronization. In fact, the delays that occur on self-links are essential for the failure to reach consensus.

\begin{example}
Theorem \ref{thm3} demands that the $\delta$-graph corresponding to
the matrix \\
$\mathbb E\{\sum_{t=n+1}^{n+L}\hat{G}^{0}(\sigma^{t})|\mathcal
F^{n}\}$ is strongly connected. This is stronger than the condition
in Theorem \ref{thm2}, which demands that the corresponding graph
has a spanning tree. We give an example to show that the strong
connectivity is necessary for the reasoning in the proof. Consider a
delayed multi-agent system on a network with two vertices and a
maximum delay of 3. Consider the form (\ref{integ}) and the matrix
$B(\cdot)$. Suppose that the state space only contains one state
$\sigma_{1}$ as follows:
\begin{eqnarray*}
B(\sigma_{1})=\left[\begin{array}{cccccccc} 0&0&1/3&0&0&1/3&0&1/3\\
0&0&0&1&0&0&0&0\\
1&0&0&0&0&0&0&0\\
0&1&0&0&0&0&0&0\\
0&0&1&0&0&0&0&0\\
0&0&0&1&0&0&0&0\\
0&0&0&0&1&0&0&0\\
0&0&0&0&0&1&0&0\end{array}\right].
\end{eqnarray*}
Here, $\tau_{0}=1$. It is clear that the subgraph corresponding to
each $\hat{G}^{0}_{1,2}$ has spanning trees but is not strongly
connected, and that there is a link between the subgraphs
corresponding to $\langle 1 \rangle$ and $\langle 0 \rangle$. For
the word $\sigma_{1}\sigma_{1}\cdots\sigma_{1}\sigma_{1}$, direct
calculations show that the corresponding matrix product is an analog of the following matrix if the length of the word is sufficiently
long:
\begin{eqnarray}
\left[\begin{array}{cccccccc} 1&1&0&1&0&1&0&0\\
0&1&0&0&0&0&0&0\\
0&1&1&1&0&1&0&1\\
0&0&0&1&0&0&0&0\\
1&1&0&1&0&1&0&0\\
0&1&0&0&0&0&0&0\\
0&1&1&1&0&1&0&1\\
0&0&0&1&0&0&0&0\end{array}\right]\label{prod1}
\end{eqnarray}
The corresponding graph is shown in Figure~\ref{fig1}, using the labeling scheme for the vertices as defined below Eq.~(\ref{integ}).
One can see that it does not have a spanning tree since the vertices
$v_{1,2}$ and $v_{2,2}$ do not have incoming links other than
self-links.
In fact, the set of eigenvalues of the matrix $B(\sigma_{1})$ contains
$1$ and $-1$, which implies that  (\ref{delayed_matrix}) with
$B(\sigma^{t})$ can not reach consensus even though the condition in
Theorem \ref{thm2} is satisfied.

\begin{figure}[]
\begin{center}
\includegraphics[width=2.4in]{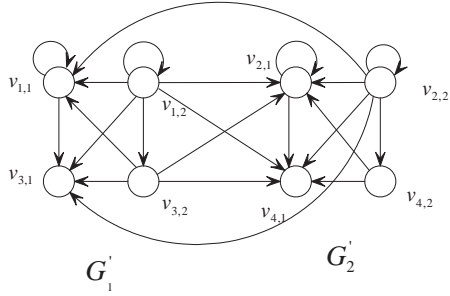}
\caption{The graph
corresponding to the matrix product (\ref{prod1}).}\label{fig1}
\end{center}
\end{figure}

\end{example}

\subsection{Consensus and synchronization without self-links}
So far the stability result is based on the assumption that each
agent takes its own state into considerations when updating. In
other words, the coupling matrix has positive diagonals (possibly
with delays). There also exist consensus algorithms that are
realized by updating each agent's state via averaging its neighbor's
states and possibly {\it excluding} its own \cite{FAT}. In
\cite{DeG}, it is shown that consensus can be reached in a static
network if  each agent can communicate with others by a directed
graph and the coupling graph is aperiodic, which can be proved by
nonnegative matrix theory \cite{Horn}. In the following, we briefly
discuss the general consensus algorithms in networks of
stochastically switching topologies that do not necessarily have
self-links for all vertices.

When transmission delays occur, the general algorithm
(\ref{delayed}) can be regarded as increasing dimensions as in
(\ref{integ}). Thus, one can similarly associate  (\ref{integ}) with
a new graph on $m\times(\tau_{M}+1)$ vertices
$\{v_{ij}:~i\in\underline{\tau_{M}+1},j\in\underline{m}\}$, denoted
by $\mathcal G^{'}(\cdot)$, where $B(\cdot)$ denotes the link set of
$\mathcal G^{'}(\cdot)$, by which $v_{ij}$ corresponds to the
$(i-1)\times(\tau_{M}+1)+j$ column and row of $B$.
$\hat{B}_{p}(\sigma_{1})$ as the matrix corresponding the vertices
$\{v_{ij}:~i\in\langle p\rangle,~j\in\underline{m}\}$.
Based on theorem \ref{basic_thm}, we
have the following results, which can be proved similarly to
Theorems \ref{thm2} and \ref{thm3}.

\begin{proposition}\label{prop1}
Assume {$\mathbf A$} holds, and suppose there exist  $L\in\mathbb
N$ and $\delta>0$ such that the $\delta$-graph of $\mathbb
E\{\prod_{k=u+1}^{u+L}B(\sigma^{k})|\mathcal F^{u}\}$ has a
spanning tree and self-link at one root vertex for all $n\in\mathbb
N$ almost surely. Then the algorithm (\ref{multi_undelay}) reaches
consensus.
\end{proposition}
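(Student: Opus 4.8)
The plan is to reduce the claim to Theorem~\ref{basic_thm} applied to the lifted system~(\ref{integ}), whose coupling matrix is the augmented matrix $B(\sigma^t)$. The first step is to record that $B(\sigma^t)$ is itself a \emph{stochastic} matrix: each subdiagonal block row is a row of the identity and hence sums to one, while row $i$ of the top block row $[G^0,\dots,G^{\tau_M}]$ sums to $\sum_{\tau}\sum_j G^{\tau}_{ij}(\sigma^t)=\sum_j G_{ij}(\sigma^t)=1$ because $G(\sigma^t)=\sum_{\tau}G^{\tau}(\sigma^t)$ is stochastic; nonnegativity of the $G^{\tau}$ under assumption $\mathbf A$ gives $B(\sigma^t)\ge 0$. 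Assumption $\mathbf A$ also guarantees that $B(\sigma^t)$ is measurable w.r.t.\ $\mathcal F^t$, so the lifted recursion $y^{t+1}=B(\sigma^t)y^t$ is exactly an instance of~(\ref{multi_undelay}) on the enlarged vertex set $\{v_{ij}\}$ of $m(\tau_M+1)$ nodes.

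Next I would observe that consensus of the lifted system is equivalent to consensus of the delayed system~(\ref{delayed}). Since $y^t=[{x^t}^{\top},\dots,{x^{t-\tau_M}}^{\top}]^{\top}$, the statement $y^t\to\alpha\mathbf 1$ forces $x^{t-j}\to\alpha\mathbf 1$ in $\mathbb R^m$ for every $j\in\underline{\tau_M+1}$, which is precisely consensus of the original $x$-process, and the converse is immediate. Thus it suffices to prove that the lifted system reaches consensus.

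The core step is to verify the hypotheses of Theorem~\ref{basic_thm} for $B$. By assumption the $\delta$-graph of $\mathbb E\{\prod_{k=u+1}^{u+L}B(\sigma^k)\,|\,\mathcal F^u\}$ has, almost surely and for every $u$, a spanning tree together with a self-link at one of its root vertices. By Lemma~\ref{spanning} the associated $\delta$-matrix is therefore SIA, hence in particular aperiodic, while the presence of a spanning tree is exactly indecomposability. Consequently the $\delta$-graph of this windowed conditional expectation has a spanning tree and is aperiodic for all $u$ almost surely, which is precisely the condition required by Theorem~\ref{basic_thm} with $B$ in place of $G$. Applying that theorem to the lifted system yields its consensus, and by the previous paragraph the delayed system reaches consensus as claimed.

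The main obstacle is not any single estimate but checking that the reduction is legitimate: one must be sure that the conditional-expectation and $\delta$-graph machinery of Theorem~\ref{basic_thm}, stated for the matrices $G(\sigma^t)$ of the original network, transfers verbatim once $G$ is replaced by the augmented stochastic matrix $B$ and the vertex set is enlarged. This is where the argument parallels the proofs of Theorems~\ref{thm2} and~\ref{thm3}; the only genuinely new point is that, in the absence of instantaneous self-links, the self-link needed to invoke Lemma~\ref{spanning} is supplied not by a single $B(\sigma^k)$ but by the product over a window of length $L$, which is exactly why the hypothesis is phrased in terms of the windowed conditional expectation rather than a one-step matrix.
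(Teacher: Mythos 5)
Your proposal is correct and takes essentially the same route as the paper, which disposes of this proposition in one line: under the stated hypotheses each conditional-expectation product $\mathbb E\{\prod_{k=u+1}^{u+L}B(\sigma^{k})|\mathcal F^{u}\}$ is SIA almost surely (spanning tree plus self-link at a root, via Lemma~\ref{spanning}), so Theorem~\ref{basic_thm} applies directly. Your extra verifications --- that $B(\sigma^t)$ is a stochastic, $\mathcal F^t$-measurable matrix and that consensus of the lifted system~(\ref{integ}) is equivalent to consensus of the delayed system --- are details the paper established earlier, below Eq.~(\ref{integ}), and simply takes for granted here.
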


In fact, under the stated conditions, each product $\mathbb
E\{\prod_{k=u+1}^{u+L}B(\sigma^{k})|\mathcal F^{u}\}$ is SIA almost surely; so,
this proposition is a direct consequence of Theorem \ref{basic_thm}.

In the possible absence of self-links, the following is a
consequence of Proposition \ref{prop1}.

\begin{proposition}\label{prop2}
Assume ${\mathbf A}$ and $\mathbf{B}.2$ hold ($\mathbf{B}.1$ need not hold). Suppose there exist $L\in\mathbb N$
and $\delta>0$ such that the $\delta$-graph of $\mathbb
E\{\prod_{k=n+1}^{n+L}\hat{B}_{p}(\sigma^{k})|\mathcal F^{n}\}$ is
strongly connected and has at least one self-link for all
$n\in\mathbb N$ and $p\in\underline{P}$ almost surely, where $\hat{B}_{p}$ is defined in the proof of Theorem \ref{thm3}, for example, (\ref{reorder}) in Sec.~5.3. Then the
algorithm (\ref{delayed}) synchronizes to a $P$-periodic trajectory.
In particular if $P=1$, then the algorithm (\ref{delayed}) reaches
consensus.
\end{proposition}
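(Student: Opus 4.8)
The strategy is to reduce Proposition~\ref{prop2} to Proposition~\ref{prop1} applied block-by-block, mirroring how Theorem~\ref{thm3} is reduced to Theorem~\ref{thm2}. First I would set up the periodicity structure explicitly. Under condition $\mathbf{B}.2$, the only nonzero delay-blocks correspond to delays $\tau_k$ with $\mbox{gcd}(\tau_0+1,\tau_1+1,\dots,\tau_K+1)=P>1$; this means the dynamics of $(\ref{integ})$ decouples, up to the shift structure of $B$, into $P$ residue classes modulo $P$. Concretely, I would reorder the $(\tau_M+1)m$ coordinates of $y^t$ according to the residue class $\langle p\rangle$ of the time-shift index $i$, producing a block-permuted version of $B$ in which the vertices $\{v_{ij}:i\in\langle p\rangle, j\in\underline{m}\}$ form the block governing residue class $p$. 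This is exactly the $\hat{B}_p$ referenced in the statement (with $(\ref{reorder})$ supplying the explicit reordering in Sec.~5.3).

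The key observation is that, because $P>1$ and all nonzero links respect the residue structure, the long-time product of the $B(\sigma^t)$ matrices over time steps whose lengths are multiples of $P$ acts \emph{within} each residue class. That is, iterating $(\ref{integ})$ a number of steps divisible by $P$ gives a block-diagonal action, one block per residue class $p\in\underline{P}$, with the $p$-th block being a product of the $\hat{B}_p(\sigma^k)$. Thus synchronization of the full system to a $P$-periodic trajectory is equivalent to \emph{consensus within each of the $P$ residue-class blocks}: each block contracts its own coordinates to a common value, and the $P$ distinct block-limits, revisited cyclically by the shift, trace out the periodic orbit $s(t)$ of period $P$.

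With this reduction in hand, I would apply Proposition~\ref{prop1} to each block $\hat{B}_p$ separately. The hypothesis supplies exactly what Proposition~\ref{prop1} needs: the $\delta$-graph of $\mathbb E\{\prod_{k=n+1}^{n+L}\hat{B}_p(\sigma^k)|\mathcal F^n\}$ has a spanning tree (here even strongly connected) and has at least one self-link, for all $n$ and all $p\in\underline{P}$ almost surely. By Lemma~\ref{spanning} the relevant expectation-product is SIA almost surely, and then Lemma~\ref{SIA} together with the Second Borel--Cantelli Lemma (Lemma~\ref{lemSecondBorelCantelli}) and the Hajnal inequality $(\ref{Hajnal_inequality})$ forces the Hajnal diameter of the block product to zero almost surely, giving consensus within block $p$. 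Running this for every $p$ yields synchronization to a $P$-periodic trajectory; and when $P=1$ there is a single block, the whole system reaches consensus, which is precisely the stated special case.

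The main obstacle I anticipate is not the probabilistic machinery—that is inherited verbatim from the proofs of Theorem~\ref{thm3} and Proposition~\ref{prop1}—but rather making the block-decoupling rigorous. Specifically, one must verify that the residue-class reordering genuinely renders the $P$-step (or $LP$-step) product block-diagonal, with no cross-class leakage, and that the self-link assumption in each block is preserved under the conditional-expectation and $\delta$-graph operations. The delicate point is that the shift structure of $B$ mixes adjacent time-indices, so one has to check carefully that condition $\mathbf{B}.2$ (the vanishing of $\hat{G}^j$ off the prescribed delay set) is exactly what closes each residue class under the dynamics; this is the combinatorial heart of the argument and the step where the hypothesis $\mathbf{B}.1$ may be dropped, since instantaneous self-links are no longer needed once each block independently satisfies the SIA criterion.
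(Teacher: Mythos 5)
Your plan coincides with the paper's own treatment of Proposition~\ref{prop2}: the paper likewise obtains it as a direct consequence of Proposition~\ref{prop1} (i.e., Lemma~\ref{spanning} plus the Borel--Cantelli/Hajnal machinery of Theorem~\ref{basic_thm}) applied separately to each residue-class block $\hat{B}_p$, with the block decoupling under condition $\mathbf{B}.2$ imported from the permutation structure (\ref{reorder}) and Lemma~\ref{lem_syn_m2}(iv) in the proof of Theorem~\ref{thm3}. Your version is, if anything, slightly more careful than the paper's sketch, since you correctly locate the block-diagonal action in products whose length is a multiple of $P$ rather than in a single permuted $B(\sigma^t)$.
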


\section{Applications}

Adapted processes are rather general and include i.i.d processes and
Markov chains as two special cases. Therefore, the results obtained
above can be directly utilized to derive sufficient conditions for
the cases where the topology switching and delays are i.i.d. or
Markovian.

First, by a standard construction as mentioned in Sec.~1, from the property of i.i.d. it follows
that $\mathbb E\{G(\sigma^{k+1})|\mathcal{F}^{k}\}=\mathbb E\{G(\sigma^{k+1})\}$ is
a constant stochastic matrix. Then, we have the following results.
\begin{corollary}
Assume that $\mathbf A$ holds and $\{\sigma^{t}\}$ is an i.i.d.
process. Suppose there exist $\mu>0$, $L\in\mathbb N$, and $\delta>0$
such that $G^{0}(\sigma)>\mu I_{m}$ for all $\sigma\in\Omega$ and
the $\delta$-graph of $ \mathbb E\{G(\sigma^{1})\}$ has a spanning
tree. Then the delayed multi-agent system via algorithm
(\ref{delayed}) reaches consensus.
\end{corollary}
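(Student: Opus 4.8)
The plan is to show that this Corollary is a direct specialization of Theorem \ref{thm2}: under the i.i.d.\ assumption I will verify, one by one, the three hypotheses of Theorem \ref{thm2} (condition $\mathbf A$, the instantaneous self-link bound $G^0(\sigma)>\mu I_m$, and the spanning-tree condition on the $\delta$-graph of the summed conditional expectation), and then invoke that theorem. Condition $\mathbf A$ and the bound $G^0(\sigma)>\mu I_m$ are inherited verbatim, so the only genuine work is to connect the single-matrix hypothesis stated here, phrased in terms of $\mathbb E\{G(\sigma^1)\}$, to the summed conditional-expectation hypothesis of Theorem \ref{thm2}.

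First I would compute the conditional expectation occurring in Theorem \ref{thm2}. Since $\{\sigma^t\}$ is i.i.d.\ and $\mathcal F^n$ is generated by $\sigma^1,\dots,\sigma^n$, each $\sigma^k$ with $k>n$ is independent of $\mathcal F^n$, so $\mathbb E\{G(\sigma^k)|\mathcal F^n\}=\mathbb E\{G(\sigma^k)\}=\mathbb E\{G(\sigma^1)\}$ by identical distribution. Summing and using linearity of conditional expectation yields
\begin{eqnarray*}
\mathbb E\bigg\{\sum_{k=n+1}^{n+L}G(\sigma^k)\,\bigg|\,\mathcal F^n\bigg\}=L\,\mathbb E\{G(\sigma^1)\}
\end{eqnarray*}
for every $n\in\mathbb N$. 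The crucial consequence is that this matrix is \emph{deterministic} and independent of $n$, so the clause ``for all $n\in\mathbb N$ almost surely'' in Theorem \ref{thm2} collapses to a single deterministic statement about $L\,\mathbb E\{G(\sigma^1)\}$. (Note that choosing $L=1$ already reduces the sum to $\mathbb E\{G(\sigma^1)\}$ itself, matching the Corollary's hypothesis exactly; I keep general $L$ for robustness.)

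It then remains to transfer the spanning-tree property across the scaling by $L$. By definition the $\delta$-graph of a nonnegative matrix $A$ carries the edge $e(i,j)$ precisely when $A_{ij}\ge\delta$, and scaling by the positive constant $L$ preserves the sign pattern while rescaling the threshold: for $A=\mathbb E\{G(\sigma^1)\}$ one has $(LA)_{ij}\ge L\delta$ if and only if $A_{ij}\ge\delta$. Hence, setting $\delta'=L\delta$, the $\delta'$-graph of $L\,\mathbb E\{G(\sigma^1)\}$ coincides edge-for-edge with the $\delta$-graph of $\mathbb E\{G(\sigma^1)\}$, which has a spanning tree by assumption; this holds for all $n$ almost surely since the matrix is constant. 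Applying Theorem \ref{thm2} with threshold $\delta'$ then gives consensus for (\ref{delayed}). I expect no real obstacle, as the result is essentially a specialization; the only point requiring care is the threshold bookkeeping $\delta'=L\delta$, because the Corollary is stated for the single matrix $\mathbb E\{G(\sigma^1)\}$ while Theorem \ref{thm2} is stated for the $L$-fold sum, and one must confirm that the positive rescaling does not alter which edges survive the cutoff and hence preserves the spanning tree.
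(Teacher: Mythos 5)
Your proposal is correct and follows essentially the same route as the paper: the paper's (very brief) justification is precisely that i.i.d.\ implies $\mathbb E\{G(\sigma^{k+1})|\mathcal F^{k}\}=\mathbb E\{G(\sigma^{k+1})\}=\mathbb E\{G(\sigma^{1})\}$ is a constant stochastic matrix, after which the corollary is read off from Theorem \ref{thm2}. Your additional bookkeeping (the sum equals $L\,\mathbb E\{G(\sigma^{1})\}$, with threshold $\delta'=L\delta$ preserving the edge set, or simply taking $L=1$) is exactly the detail the paper leaves implicit, and it is handled correctly.
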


\begin{corollary}
Assume that $\mathbf A$ and $\mathbf B$ hold and $\{\sigma^{t}\}$
is an i.i.d. process. Suppose there exist $L\in\mathbb N$ and $\delta>0$
such that the $\delta$-graph of $\mathbb
E\{\hat{G}^{0}(\sigma^{1})\}$ is strongly connected for all
$n\in\mathbb N$ almost surely. Then the system (\ref{delayed})
synchronizes to a $P$-periodic trajectory. In particular, if $P=1$,
then (\ref{delayed}) reaches consensus.
\end{corollary}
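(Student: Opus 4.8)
The plan is to deduce this corollary directly from Theorem \ref{thm3} by showing that, in the i.i.d.\ setting, the hypotheses of the corollary are precisely the hypotheses of Theorem \ref{thm3}. I regard $\{\sigma^{t}\}$ as an adapted process through the standard construction of Sec.~1, so that Theorem \ref{thm3} applies; it then remains only to translate the conditions. Conditions $\mathbf A$ and $\mathbf B$.1 are assumed outright, so the work is to reduce the spanning/connectivity condition and the nonzero part of $\mathbf B$.2 to statements about constant matrices.

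First I would invoke the fact, noted at the start of Sec.~4 for i.i.d.\ processes, that $\mathbb E\{\hat G^{0}(\sigma^{k+1})\,|\,\mathcal F^{k}\}=\mathbb E\{\hat G^{0}(\sigma^{k+1})\}=\mathbb E\{\hat G^{0}(\sigma^{1})\}$ is a deterministic constant matrix, independent of $k$. Combining this with the tower property of conditional expectation, for any $k>n$,
\begin{eqnarray*}
\mathbb E\{\hat G^{0}(\sigma^{k})\,|\,\mathcal F^{n}\}=\mathbb E\big\{\mathbb E\{\hat G^{0}(\sigma^{k})\,|\,\mathcal F^{k-1}\}\,\big|\,\mathcal F^{n}\big\}=\mathbb E\{\hat G^{0}(\sigma^{1})\},
\end{eqnarray*}
since the inner expectation is already the constant matrix $\mathbb E\{\hat G^{0}(\sigma^{1})\}$. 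By linearity of the conditional expectation this gives
\begin{eqnarray*}
\mathbb E\bigg\{\sum_{k=n+1}^{n+L}\hat G^{0}(\sigma^{k})\,\bigg|\,\mathcal F^{n}\bigg\}=L\,\mathbb E\{\hat G^{0}(\sigma^{1})\}
\end{eqnarray*}
for every $n$.

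Next I would observe that multiplying a nonnegative matrix by the positive scalar $L$ leaves its associated graph unchanged: by the definition of the $\delta$-matrix, $[L\,A]_{ij}\ge L\delta$ if and only if $A_{ij}\ge\delta$, so the $(L\delta)$-graph of $L\,\mathbb E\{\hat G^{0}(\sigma^{1})\}$ coincides with the $\delta$-graph of $\mathbb E\{\hat G^{0}(\sigma^{1})\}$. Hence the hypothesis of the corollary---strong connectivity of the $\delta$-graph of $\mathbb E\{\hat G^{0}(\sigma^{1})\}$---says exactly that the $(L\delta)$-graph of $\mathbb E\{\sum_{k=n+1}^{n+L}\hat G^{0}(\sigma^{k})\,|\,\mathcal F^{n}\}$ is strongly connected; since the reduced matrix is deterministic and $n$-independent, the qualifiers ``for all $n$'' and ``almost surely'' in the statement are automatically met. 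The same reduction applied to the nonzero requirement in $\mathbf B$.2 turns ``the $\delta$-matrix of $\mathbb E\{\hat G^{\tau_{k}}(\sigma^{n+1})\,|\,\mathcal F^{n}\}$ is nonzero'' into ``the $\delta$-matrix of $\mathbb E\{\hat G^{\tau_{k}}(\sigma^{1})\}$ is nonzero,'' which is part of the standing assumption that $\mathbf B$ holds.

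With all the hypotheses of Theorem \ref{thm3} verified (with the threshold $L\delta$ in place of $\delta$), the conclusion follows at once: the system (\ref{delayed}) synchronizes to a $P$-periodic trajectory, and reaches consensus when $P=1$. I do not expect any genuine obstacle here; the only points requiring care are the i.i.d.\ reduction of the conditional expectations to constant unconditional expectations via the tower property, and the scale-invariance of the graph structure under multiplication by $L$, both of which are routine.
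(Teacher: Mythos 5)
Your proposal is correct and takes essentially the same route as the paper: the paper obtains this corollary directly from Theorem \ref{thm3} by observing that, for an i.i.d.\ process, $\mathbb E\{\hat{G}^{0}(\sigma^{k+1})|\mathcal F^{k}\}=\mathbb E\{\hat{G}^{0}(\sigma^{1})\}$ is a constant matrix, which is exactly your reduction. Your extra details (the tower property, linearity giving the factor $L$, and the invariance of the $\delta$-graph under scaling by $L$) simply make explicit the routine steps the paper leaves implicit.
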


Second, we consider the Markovian switching topologies, namely, the
graph sequence is induced by a homogeneous Markov chain with a
stationary distribution and the property of uniform ergodicity,
which is defined as follows.
\begin{definition}\cite{Chilina}
    A Markov chain $\{\sigma^{t}\}$, defined on $\{\Omega,\mathcal F\}$, with a stationary distribution $\pi$ and a
    transition probability $\mathbb{T}(x,A)$ is called uniformly
    ergodic if
    \begin{eqnarray*}
      \sum_{x\in\Omega}\|\mathbb{T}^{k}(x,\cdot)-\pi(\cdot)\|\to
      0 \textnormal{~as~}k\to+\infty,
    \end{eqnarray*}
where $\mathbb T^{k}(\cdot,\cdot)$ denotes the $k$-th iteration of the transition probability
$\mathbb T(\cdot,\cdot)$, for two probability measures $\mu$ and $\nu$ on
$\{\Omega,\mathcal F)\}$, and
$\|\mu-\nu\|=\sup_{\mathcal A\in\mathcal F}|\mu(\mathcal A)-\nu(\mathcal A)|$.
\end{definition}
From the Markovian property, we have the following results.

\begin{corollary}\label{Markov1}
Assume that $\mathbf A$ holds. Let $\{\sigma^{t}\}$ be an irreducible and
aperiodic Markov chain with a unique invariant measure $\pi$. Suppose $\{\sigma^{t}\}$ is uniformly ergodic and there exist
$\mu>0$ and $\delta>0$ such that
$G^{0}(\sigma)>\mu I_{m}$ for all $\sigma\in\Omega$ and the
$\delta$-graph of $\mathbb E_{\pi}\{G(\sigma^{1})\}$ has a spanning
tree. Then the delayed multi-agent system(\ref{delayed}) reaches consensus.
\end{corollary}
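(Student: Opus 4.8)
The plan is to deduce this corollary directly from Theorem~\ref{thm2} by showing that uniform ergodicity of the chain, together with the spanning-tree property of the $\delta$-graph of the \emph{stationary} expectation $\mathbb{E}_{\pi}\{G(\sigma^{1})\}$, forces the hypothesis of Theorem~\ref{thm2} to hold. The instantaneous self-link requirement $G^{0}(\sigma)>\mu I_{m}$ is assumed verbatim, so the only thing that needs to be checked is that there exist $L$ and a threshold $\delta'>0$ for which the $\delta'$-graph of $\mathbb{E}\{\sum_{k=n+1}^{n+L}G(\sigma^{k})\,|\,\mathcal F^{n}\}$ has a spanning tree for every $n$, almost surely.

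First I would use the Markov property to remove the dependence on the full history: since $\{\sigma^{t}\}$ is a Markov chain, $\mathbb{E}\{G(\sigma^{n+k})\,|\,\mathcal F^{n}\}=\mathbb{E}\{G(\sigma^{n+k})\,|\,\sigma^{n}\}=\int \mathbb{T}^{k}(\sigma^{n},dy)\,G(y)$, a function of the single random variable $\sigma^{n}$. Writing $M^{(k)}(x):=\int \mathbb{T}^{k}(x,dy)\,G(y)$, the conditional expectation of the sum equals $\sum_{k=1}^{L}M^{(k)}(\sigma^{n})$. Because $G(\cdot)$ is a stochastic matrix, its entries are bounded by $1$, so the estimate $|\int f\,d\mathbb{T}^{k}(x,\cdot)-\int f\,d\pi|\le 2\|f\|_{\infty}\,\|\mathbb{T}^{k}(x,\cdot)-\pi\|$ applied to each entry $f=G_{ij}$, combined with uniform ergodicity $\sup_{x}\|\mathbb{T}^{k}(x,\cdot)-\pi\|\to0$, yields $M^{(k)}(x)\to \mathbb{E}_{\pi}\{G(\sigma^{1})\}$ entrywise as $k\to\infty$, \emph{uniformly in the starting state} $x$.

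The remaining step is the threshold bookkeeping. Fix $\varepsilon=\delta/2$ and choose $k_{0}$ so that $[M^{(k)}(x)]_{ij}\ge[\mathbb{E}_{\pi}\{G\}]_{ij}-\varepsilon$ for all $k\ge k_{0}$, all $x$, and all $i,j$. Since every $M^{(k)}$ is nonnegative, dropping the first $k_{0}-1$ terms gives $[\sum_{k=1}^{L}M^{(k)}(x)]_{ij}\ge (L-k_{0}+1)([\mathbb{E}_{\pi}\{G\}]_{ij}-\varepsilon)$. Thus whenever $[\mathbb{E}_{\pi}\{G\}]_{ij}\ge\delta$ this entry is at least $(L-k_{0}+1)\,\delta/2\ge \delta/2$ as soon as $L\ge k_{0}$. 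Consequently, with $\delta'=\delta/2$ and any such $L$, the $\delta'$-graph of $\mathbb{E}\{\sum_{k=n+1}^{n+L}G(\sigma^{k})\,|\,\mathcal F^{n}\}$ contains every edge of the $\delta$-graph of $\mathbb{E}_{\pi}\{G(\sigma^{1})\}$, and hence inherits its spanning tree. Since the bound is uniform in the starting state $\sigma^{n}$, it holds for every $n$ surely, a fortiori almost surely; Theorem~\ref{thm2} then applies and gives consensus.

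I expect the only delicate point to be the uniformity in the starting state in passing from $\mathbb{T}^{k}(x,\cdot)\to\pi$ to $M^{(k)}(x)\to\mathbb{E}_{\pi}\{G\}$. This is precisely where the \emph{uniform} (rather than merely pointwise) ergodicity hypothesis is essential: it makes the spanning-tree condition hold for all realizations simultaneously, rather than only for $\pi$-almost every trajectory, which is exactly the form in which Theorem~\ref{thm2} needs it. Once this uniformity is secured, the rest is the elementary entrywise counting above.
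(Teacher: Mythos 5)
Your proof is correct and follows essentially the same route as the paper's: reduce the conditioning to $\sigma^{n}$ via the Markov property, use uniform ergodicity to make the conditional expectations converge to $\mathbb{E}_{\pi}\{G(\sigma^{1})\}$ uniformly in the starting state, pick $L$ so that the $\delta/2$-graph of the conditional expectation inherits the spanning tree, and invoke Theorem~\ref{thm2}. The only cosmetic difference is that the paper phrases the limit as a Ces\`aro average $\frac{1}{L}\sum_{i=1}^{L}\int_{\Omega}G(y)\mathbb{T}^{i}(\sigma^{n},dy)\to\mathbb{E}_{\pi}\{G(\sigma^{1})\}$, whereas you run the equivalent unnormalized tail-counting argument with the cutoff $k_{0}$; your version actually spells out the uniformity bookkeeping that the paper leaves implicit.
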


\begin{proof}
From the Markovian property, we have
\begin{eqnarray*}
E\{\frac{1}{L}\sum_{t=n+1}^{n+L}G(\sigma^{t})|\mathcal{F}^{n}\}
=E\{\frac{1}{L}\sum_{t=n+1}^{n+L}G(\sigma^{t})|\sigma^{n}\}.
\end{eqnarray*}
If $\{\sigma^{t}\}$ is uniformly ergodic, then
\begin{eqnarray*}
\lim_{L\to +\infty}\mathbb E\{\frac{1}{L}\sum_{1=n+1}^{n+L}G(\sigma^{t})|\sigma^{n}\}
=\lim_{L\to +\infty}\frac{1}{L}\sum_{i=1}^{L}\int_{\Omega}
G(y)\mathbb{T}^{i}(\sigma^{n},dy)
=\int_{\Omega}G(y)\pi(dy)=\mathbb E_{\pi}[G(\sigma^{1})].
\end{eqnarray*}
Since the convergence is uniform, there exits $L$ such that
the $\delta/2$-graph corresponding to
$\mathbb E\{(1/L)\sum_{t=n+1}^{n+L}G(\sigma^{t})
|\mathcal{F}^{n}\}$ has a spanning tree almost surely.
From
Theorem \ref{thm2}, the conclusion can be derived.
\end{proof}

\begin{corollary}\label{Markov2}
Assume that $\mathbf A$ and $\mathbf B$ hold, and let $\{\sigma^{t}\}$ be
an irreducible and aperiodic Markov chain with a unique invariant
measure $\pi$. Suppose that  $\{\sigma^{t}\}$ is uniformly ergodic and there exists $\delta>0$ such that the
$\delta$-graph of $\mathbb E_{\pi}\{\hat{G}^{0}(\sigma^{1})\}$ is
strongly connected. Then the system (\ref{delayed}) synchronizes to
a $P$-periodic trajectory. In particular, if $P=1$, then
(\ref{delayed}) reaches consensus.
\end{corollary}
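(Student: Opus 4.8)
The plan is to establish this as the Markovian analogue of Theorem~\ref{thm3}, in exactly the same way that Corollary~\ref{Markov1} is the Markovian analogue of Theorem~\ref{thm2}; so I would reduce everything to an application of Theorem~\ref{thm3} once I produce a suitable pair $(L,\delta')$. First I would use the Markov property to replace conditioning on the whole filtration by conditioning on the present state. Since $\{\sigma^t\}$ is a Markov chain and each $\hat{G}^0(\sigma^t)$ is a function of $\sigma^t$ alone, one has
\begin{eqnarray*}
\mathbb E\bigg\{\frac{1}{L}\sum_{t=n+1}^{n+L}\hat{G}^0(\sigma^t)\Big|\mathcal F^n\bigg\}
=\mathbb E\bigg\{\frac{1}{L}\sum_{t=n+1}^{n+L}\hat{G}^0(\sigma^t)\Big|\sigma^n\bigg\}
=\frac{1}{L}\sum_{i=1}^{L}\int_\Omega \hat{G}^0(y)\,\mathbb T^i(\sigma^n,dy).
\end{eqnarray*}

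Next I would invoke uniform ergodicity, as in the proof of Corollary~\ref{Markov1}: it forces $\mathbb T^i(x,\cdot)\to\pi$ uniformly in the starting state $x$, hence the Ces\`aro averages of the kernels converge uniformly, giving
\begin{eqnarray*}
\lim_{L\to+\infty}\frac{1}{L}\sum_{i=1}^{L}\int_\Omega \hat{G}^0(y)\,\mathbb T^i(\sigma^n,dy)
=\int_\Omega \hat{G}^0(y)\,\pi(dy)=\mathbb E_\pi[\hat{G}^0(\sigma^1)],
\end{eqnarray*}
with the convergence uniform in $n$ and in the value of $\sigma^n$. This uniformity is the crux: it is what upgrades pointwise convergence for each fixed starting state into an estimate that is valid for all $n$ simultaneously with probability one, and it is precisely the reason the hypothesis is stated in terms of \emph{uniform} rather than merely pointwise ergodicity.

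I would then translate this entrywise analytic convergence into the combinatorial hypothesis of Theorem~\ref{thm3}. By assumption the $\delta$-graph of $\mathbb E_\pi[\hat{G}^0(\sigma^1)]$ is strongly connected, meaning every one of its links corresponds to an entry that is at least $\delta$. By the uniform convergence above there is an $L$ so large that every entry of the $L$-term average differs from the corresponding entry of $\mathbb E_\pi[\hat{G}^0(\sigma^1)]$ by less than $\delta/2$, uniformly in $n$ almost surely; hence each entry that is $\ge\delta$ in the limit is $\ge\delta/2$ in the average. Consequently the $\delta/2$-graph of $\mathbb E\{\frac{1}{L}\sum_{t=n+1}^{n+L}\hat{G}^0(\sigma^t)|\mathcal F^n\}$ contains the (strongly connected) $\delta$-graph of $\mathbb E_\pi[\hat{G}^0(\sigma^1)]$ as a subgraph on the same vertex set, and since adding links to a strongly connected graph preserves strong connectivity, the $\delta/2$-graph of the average is strongly connected for all $n$ almost surely.

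Finally, rescaling by the factor $L$ turns the average back into the sum $\mathbb E\{\sum_{t=n+1}^{n+L}\hat{G}^0(\sigma^t)|\mathcal F^n\}$ and replaces the threshold $\delta/2$ by $\delta'=L\delta/2$ without altering the associated graph, so the $\delta'$-graph of this sum is strongly connected for all $n$ almost surely. Conditions $\mathbf A$ and $\mathbf B$ hold by hypothesis, so all assumptions of Theorem~\ref{thm3} are met with this $L$ and $\delta'$, and the conclusion (synchronization to a $P$-periodic trajectory, and consensus when $P=1$) follows immediately. I expect the only delicate point to be the passage from the analytic convergence to the strong-connectivity statement uniformly in $n$; everything else is a direct transcription of the argument used for Corollary~\ref{Markov1}.
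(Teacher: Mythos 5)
Your proposal is correct and takes essentially the same route as the paper, which proves Corollary~\ref{Markov2} by the identical argument used for Corollary~\ref{Markov1} (Markov property to condition on $\sigma^n$, uniform ergodicity to get uniform convergence of the Ces\`aro averages of the kernel to $\mathbb E_\pi\{\hat{G}^0(\sigma^1)\}$, then a $\delta/2$-threshold argument) before invoking Theorem~\ref{thm3}. Your write-up simply makes explicit two details the paper leaves implicit: the rescaling from the $L$-term average to the sum appearing in Theorem~\ref{thm3}, and the observation that the $\delta/2$-graph contains the strongly connected $\delta$-graph of the limit and hence remains strongly connected.
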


These corollaries can be proved directly from Theorems \ref{thm3} in the same way as Corollary \ref{Markov1}.
It can be seen that the a homogeneous Markov chain with finite state space and unique invariant distribution is uniformly ergodic. Hence, the results of Corollaries \ref{Markov1} and \ref{Markov2} hold for this scenario.

\section{Proofs of the main results}

In the following, the coupling matrix $B(\cdot)$ in
the delayed system (\ref{integ}) is written in the following block
form:
\begin{eqnarray*}
B(\sigma^{t})
=\left[\begin{array}{cccc}B_{1,1}(\sigma^{t})&B_{1,2}(\sigma^{t})&\cdots
&B_{1,\tau_{M}+1}(\sigma^{t})\\B_{2,1}(\sigma^{t})&B_{2,2}(\sigma^{t})&\cdots
&B_{2,\tau_{M}+1}(\sigma^{t})\\
\vdots&\vdots&\ddots&\vdots\\
B_{\tau_{M}+1,1}(\sigma^{t})&B_{\tau_{M}+1,2}(\sigma^{t})&\cdots
&B_{\tau_{M}+1,\tau_{M}+1}(\sigma^{t})\end{array}\right]\in\mathbb
R^{(\tau_{M}+1)m,(\tau_{M}+1)m}\label{B}
\end{eqnarray*}
with $B_{ij}(\sigma^{t})\in\mathbb R^{m,m}$,
$i,j\in\underline{\tau_{M}+1}$. For two index sets $I$ and $J$, we
denote by $[B(\sigma^{t})]_{I,J}$ the sub-matrix of $B(\sigma^{t})$
with row index set $I$ and column index set $J$. For an $n$-length
word $\sigma=(\sigma^{k})_{k=1}^{n}$ in the stochastic process, we
use $B(\sigma)$ to represent the matrix product
$\prod_{i=1}^{n}B(\sigma^{i})$. One can see that the structure of the matrix $B(\sigma^{t})$ has the following properties: (1). Each $B_{i,i-1}=I_{m}$ for all $i\ge 2$; (2). $B_{i,j}=0$ for all $i\ge 2$ and $j\ne i-1$. These properties are essential for the following proofs.

As the same way defined below Eq. (\ref{integ}), let us consider the corresponding graph $\mathcal G(B(\sigma^{t}))$,
which has $(\tau_{M}+1)m$ vertices, which we denote by
$\{v_{i,j},~i\in\underline{\tau_{M}+1},~j\in\underline{m}\}$, where
$v_{i,j}$ corresponds to the $(i-1)m+j$ row of the matrix
$B(\sigma)$.

We denote the following finitely generated periodic group:
$$\langle i_{1},i_{2},\dots,i_{K} \rangle_{j} := \{p:~p=\sum_{l=k}^{K}i_{k}p_{k}~{\rm
mod}~j,~p_{k}\in\mathbb Z\}.$$
If these numbers are
be picked in a finite integer set, for instance,
$\{1,\dots,\tau_{M}+1\}$ in the present paper, then $\langle
i_{1},i_{2},\dots,i_{K} \rangle_{j}$ denotes the set $\langle
i_{1},i_{2},\dots,i_{K} \rangle_{j}\bigcap\underline{\tau_{M}+1}$
unless specified otherwise. As a default setup, $\langle i\rangle$
denotes $\langle i\rangle_{\tau_{0}+1}$ where $\tau_{0}$ is the
self-linking delay as in  (\ref{delayed_matrix}). We will sometimes
be interested in whether an element in a matrix is zero or not,
regardless of its actual value.

\subsection{Proof of Theorem \ref{basic_thm}}
From the condition in this theorem, we can see that the $\delta$-matrix of $\mathbb E\{\prod_{k=n+1}^{n+L}G(\sigma^{k})|\mathcal
F^{n}\}$ is SIA for all $n\in\mathbb N$. Lemma \ref{SIA} states that there exists $N\in\mathbb
N$ such that the product of any $N$ SIA matrices in $\mathbb
R^{m,m}$ is scrambling. Note that
\begin{eqnarray*}
&&\mathbb E\bigg\{\prod_{t=n+1}^{n+NL}G(\sigma^{t})|\mathcal
F^{n}\bigg\}=\mathbb E\bigg\{\cdots\mathbb E\bigg\{\mathbb E
\bigg\{\prod_{t_{L}=n+(N-1)L+1}^{n+NL}G(\sigma^{t_{L}})|\mathcal
F^{n+(N-1)L}\bigg\}\\
&&\prod_{t_{L-1}=n+(N-2)L+1}^{n+(N-1)L}G(\sigma^{t_{L-1}})| \mathcal
F^{n+(N-2)L}\bigg\}\cdots\prod_{t_{1}=n+1}^{n+L}G(\sigma^{t_{1}})|\mathcal
F^{n}\bigg\},
\end{eqnarray*}
since $\{\mathcal F^{t}\}$ is a filtration.  This implies that there exists a positive constant
$\delta_{1}<\delta^{N}$ such that the $\delta_{1}$-graph of $\mathbb
E\{\prod_{t=n+1}^{n+NL}G(\sigma^{t})|\mathcal F^{n}\}$ is
scrambling. So, from Lemma 3.12 in Ref. \cite{LLC}, there exist $\delta'>0$
and $M_{1}\in\mathbb N$ such that
\begin{eqnarray*}
\mathbb
P\bigg\{\eta\bigg(\prod_{t=n+1}^{n+M_{1}NL}G(\sigma^{t})\bigg)>\delta'|\mathcal
F^{n}\bigg\}>\delta',~\forall~n\in\mathbb N.
\end{eqnarray*}
Let $C_{k}=\prod_{t=kM_{1}NL+1}^{(k+1)M_{1}NL}G(\sigma^{t})$. We can
conclude that for almost every sequence of $\{\sigma^{t}\}$, it
holds that
\begin{eqnarray*}
\lim_{K\to\infty}\sum_{k=1}^{K}\mathbb
P\bigg\{\eta(C_{k})>\delta'|\mathcal
F^{kNL}\bigg\}>\lim_{K\to\infty}K\times\delta'=+\infty.
\end{eqnarray*}
From Lemma \ref{lemSecondBorelCantelli}, we can conclude that the
events $\{\eta(C_{k})>\delta'\}$, $k=1,2,\dots,$ occur infinitely often
almost surely. Therefore, we can complete the proof directly from
Lemma \ref{basic_lemma}.

\subsection{Proof of Theorem \ref{thm2}}

The proof of this theorem is based on the structural characteristics
of the product of matrices $B(\cdot)$. We denote by
$[B(\cdot)]_{ij}$ the $\mathbb R^{m,m}$ sub-matrix of $B(\sigma)$ in
the position $(i,j)$. We first show by the following lemma that the graph corresponding to the product of more than $\tau_{M}+1$ successive matrices $B(\sigma^{t})$, as defined by (\ref{integ}), has a spanning tree and self-link at one root vertex. Thus, we can prove Theorem \ref{thm2} by employing Theorem \ref{basic_thm}.

\begin{lemma}\label{lem_con_matrix}
Under the conditions in Theorem \ref{thm2}, for any $n$-length word
$\sigma=(\sigma_{i})_{i=1}^{n}$  with $n\ge\tau_{M}+1$, there exists $\mu_{1}>0$ such that
\begin{enumerate}[(i).]
\item $[B(\sigma)]_{i,1}\ge \mu_{1}^{n}I_{m}$;
\item $\sum_{j=1}^{\tau_{M}+1}[B(\sigma)]_{1,j}\ge\mu_{1}^{n}
\sum_{j=1}^{\tau_{M}+1}\sum_{k=1}^{n}G^{j}(\sigma^{k})$.
\end{enumerate}
\end{lemma}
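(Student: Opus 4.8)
The plan is to exploit the sparse ``companion'' block structure of $B(\cdot)$ and to read each entry of the product $B(\sigma)=B(\sigma^{n})\cdots B(\sigma^{1})$ as a sum, over directed block-walks of length $n$, of the ordered products of the block weights encountered. Since every block of every $B(\sigma^{t})$ is a nonnegative matrix, each walk contributes a nonnegative matrix, and matrix multiplication is monotone for the entrywise order; hence it suffices, for each target block, to exhibit one convenient walk (or a family of pairwise distinct walks) whose weight alone already dominates the claimed bound. First I would record the block-level adjacency: the only nonzero blocks of $B(\sigma^{t})$ are $[B]_{1,j}=G^{j-1}(\sigma^{t})$ and $[B]_{i,i-1}=I_{m}$ for $i\ge2$. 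Thus at each time step a walk has exactly two kinds of moves available: a \emph{return} move from block $\tau+1$ to block $1$ carrying weight $G^{\tau}(\sigma^{t})$ (the case $\tau=0$ being the self-loop $1\to1$ with weight $G^{0}\ge\mu I_{m}$), and a \emph{shift} move from block $i$ to block $i+1$ carrying weight $I_{m}$.

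For part (i), whose left-hand side corresponds to walks from block $1$ to block $i$, I would use the single walk that stays at block $1$ for the first $n-(i-1)$ steps and then performs the descent $1\to2\to\cdots\to i$ in the last $i-1$ steps. This is admissible because $i\le\tau_{M}+1\le n$, so $n-(i-1)\ge0$. Its weight is at least $\mu^{\,n-i+1}I_{m}$ (the self-loop steps each contribute $\ge\mu I_{m}$ and each shift contributes $I_{m}$), and since $G^{0}$ is a submatrix of the stochastic matrix $G=\sum_{\tau}G^{\tau}$ we have $\mu<1$, whence $\mu^{\,n-i+1}\ge\mu^{\,n}$. As all remaining walks contribute nonnegatively, $[B(\sigma)]_{i,1}\ge\mu^{\,n}I_{m}$, giving (i) with $\mu_{1}=\mu$.

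For part (ii), whose left-hand side $\sum_{j}[B(\sigma)]_{1,j}$ is the total weight of \emph{all} length-$n$ walks ending at block $1$, across all starting columns $j$, I would construct for each delay $\tau\in\{1,\dots,\tau_{M}\}$ and each time $k\in\{1,\dots,n\}$ one walk $P_{\tau,k}$ performing a single nontrivial excursion: it descends $1\to2\to\cdots\to\tau+1$ by shifts and returns $\tau+1\to1$ through the $G^{\tau}$ edge exactly at step $k$, with every other step a self-loop at block $1$. Reading the product in time order and using monotonicity, its weight is at least $\mu^{\,n-\tau-1}G^{\tau}(\sigma^{k})\ge\mu^{\,n}G^{\tau}(\sigma^{k})$. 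The decisive point is that the walks $P_{\tau,k}$ are pairwise distinct, because the unique deep excursion in such a walk determines both its depth $\tau+1$ and its return time $k$; therefore their weights sum to a lower bound for $\sum_{j}[B(\sigma)]_{1,j}$, yielding $\sum_{j=1}^{\tau_{M}+1}[B(\sigma)]_{1,j}\ge\mu^{\,n}\sum_{\tau=1}^{\tau_{M}}\sum_{k=1}^{n}G^{\tau}(\sigma^{k})$, i.e.\ the assertion with $\mu_{1}=\mu$ and the convention $G^{\tau_{M}+1}\equiv0$. Note that $G^{0}$ is correctly absent from the right-hand side: the all-self-loop walk is the only source of a $G^{0}$ factor, so its contribution cannot dominate $\sum_{k}G^{0}(\sigma^{k})$, and including $\tau=0$ would make (ii) false.

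I expect the main obstacle to be the bookkeeping in the boundary cases of part (ii): when $k\le\tau$ the excursion cannot begin at block $1$ within the available time, so one must instead start the walk at the higher block column $\tau+2-k$ (admissible since $2\le\tau+2-k\le\tau_{M}+1$) and begin the descent partway. This is exactly where the hypothesis $n\ge\tau_{M}+1$ is used: it guarantees room for one full excursion of any admissible depth together with its final landing at block $1$. One then checks that these shifted-start walks are still valid, are pairwise distinct from one another and from the $k\ge\tau+1$ walks, and tracks the entrywise inequalities through the noncommutative product; beyond the monotonicity of nonnegative matrix multiplication and the bound $\mu<1$, no genuinely new idea is required. (An equivalent, and perhaps more mechanical, route is to run an induction on $n$ using the recursions $[B(\sigma)]_{i,1}=[B(\sigma')]_{i-1,1}$ for $i\ge2$ and $[B(\sigma)]_{1,j}=\sum_{\tau}G^{\tau}(\sigma^{n})[B(\sigma')]_{\tau+1,j}$, where $\sigma'$ is the truncated word; the walk picture above simply makes the bounds transparent.)
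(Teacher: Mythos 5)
Your argument is essentially the paper's own: both read the entries of $B(\sigma^{n})\cdots B(\sigma^{1})$ as nonnegative sums over block-level walks and lower-bound them by exhibiting specific walks --- for (i) the ``stay at block $1$, then shift'' walk, and for (ii) the single-excursion walks whose return edge $G^{\tau}$ fires at time $k$ (the paper's case split $t_{0}\ge j$ versus $j>t_{0}$ is precisely your $k\ge\tau+1$ versus $k\le\tau$). You are in fact more careful than the paper on the two points it glosses over: the pairwise distinctness of the walks, without which its final step ``summing the right-hand side with respect to $t_{0}$ and $j$'' is unjustified, and the feasibility bookkeeping in the boundary case $k\le\tau$.

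The one genuine error is your parenthetical claim that including $\tau=0$ ``would make (ii) false.'' It would not. The all-self-loop block walk contributes $\prod_{k=1}^{n}G^{0}(\sigma_{k})$, and inside this single block walk you can repeat your distinct-jump-time argument at the level of matrix entries: staying on diagonal entries (each $\ge\mu$) except for one off-diagonal factor $G^{0}(\sigma_{t_{0}})_{ab}$ at time $t_{0}$ gives $\bigl(\prod_{k=1}^{n}G^{0}(\sigma_{k})\bigr)_{ab}\ge\mu^{n-1}\sum_{t_{0}=1}^{n}G^{0}(\sigma_{t_{0}})_{ab}$ for $a\ne b$, since the paths for different $t_{0}$ are distinct; the diagonal entries are handled by $\bigl(\prod_{k}G^{0}(\sigma_{k})\bigr)_{aa}\ge\mu^{n}\ge\mu_{1}^{n}\,n\ge\mu_{1}^{n}\sum_{t_{0}}G^{0}(\sigma_{t_{0}})_{aa}$ once $\mu_{1}\le\mu\,3^{-1/3}$ (note $n^{-1/n}\ge 3^{-1/3}$ for all $n$). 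So (ii) holds with the $\tau=0$ term included, at the price of a smaller but still $n$-independent $\mu_{1}$. This matters for how Lemma \ref{lem_con_matrix} is used: the paper's own summation runs over all $j\in\{1,\dots,\tau_{M}+1\}$ with $[B(\sigma_{t_{0}})]_{1,j}=G^{j-1}(\sigma_{t_{0}})$, so the intended right-hand side is $\mu_{1}^{n}\sum_{\tau=0}^{\tau_{M}}\sum_{k}G^{\tau}(\sigma^{k})$ (the superscript in the statement is an index-shift typo), and the proof of Theorem \ref{thm2} needs exactly this inclusive form: the spanning tree hypothesized there lives in the graph of $\mathbb{E}\{\sum_{k}G(\sigma^{k})|\mathcal{F}^{n}\}$ with $G=\sum_{\tau=0}^{\tau_{M}}G^{\tau}$, and its links may well be carried by $G^{0}$, which your exclusive version of (ii) cannot transfer to $\sum_{l}[B(\sigma)]_{1,l}$. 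So keep your proof, but add the entry-level argument for the $\tau=0$ term rather than declaring it impossible.
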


\bigskip
\begin{proof} We choose $0<\mu_{1}<\mu$, where $\mu$ is defined in Theorem \ref{thm2}. (i). For a word $\sigma=(\sigma_{i})_{i=1}^{n}$ with
$n\ge\tau_{M}+1$,
\begin{eqnarray*}
[B(\sigma)]_{i,1}&=&\sum_{i_{1},\dots,i_{n}}[B(\sigma_{n})]_{i,i_{1}}
[B(\sigma_{n-1})]_{i_{1},i_{2}}\cdots
[B(\sigma_{1})]_{i_{n,1}}\\
&\ge&\bigg(\prod_{k=n-i+2}^{n}[B(\sigma_{k})]_{k+i-n,k+i-n-1}\bigg)
\bigg(\prod_{k=1}^{n-i+1}[B(\sigma_{k})]_{1,1}\bigg)\\
&=&\prod_{k=1}^{n-i+1}[B(\sigma_{k})]_{1,1}
\ge\mu^{n}_{1}I_{m}
\end{eqnarray*}
since $[B(\varpi)]_{k+i-n,k+i-n-1}=I_{m}$ for all $k\ge n-i+2$ and $[B(\varpi)]_{1,1}\ge
\mu I_{m}\ge\mu_{1}I_{m}$ for all $\varpi\in\Omega$.

(ii). Let $j\in\underline{\tau_{M}+1}$ and
$t_{0}\in\underline{n}$. If $t_{0}\ge j$, we have
\begin{eqnarray*}
\sum_{l}[B(\sigma)]_{1,l}&=&\sum_{i_{1},\dots,i_{n},l}[B(\sigma_{n})]
_{1,i_{1}}[B(\sigma_{n-1})]_{i_{1},i_{2}}
\cdots[B(\sigma_{1})]_{i_{n},l}\\
&\ge&\bigg(\prod_{k=t_{0}+1}^{n}[B(\sigma_{k})]_{1,1}\bigg)[B(\sigma_{t_{0}})]
_{1,j}
\bigg(\prod_{l=t_{0}-j+2}^{t_{0}-1}[B(\sigma_{l})]_{l-t_{0}+j,l-t_{0}+j-1}\bigg)\\
&&\bigg(\prod_{p=1}^{t_{0}-j+1}[B(\sigma_{p})]_{1,1}\bigg)\\
&\ge&\mu^{n}_{1}[B(\sigma_{t_{0}})]_{1,j},
\end{eqnarray*}
since $[B(\varpi)]_{1,1}\ge\mu_{1} I_{m}$, $[B(\varpi)]_{l-t_{0}+j,l-t_{0}+j-1}=I_{m}$ for all $l\ge t_{0}-j+2$ for all $\varpi\in\Omega$;
whereas if $j>t_{0}$, we similarly have
\begin{eqnarray*}
\sum_{l}[B(\sigma)]_{1,l}&\ge&\bigg(\prod_{k=t_{0}+1}^{n}[B(\sigma_{n})]
_{1,1}\bigg)
[B(\sigma_{t_{0}})]_{1,j}\bigg(\prod_{l=1}^{t_{0}-1}
[B(\sigma_{l})]_{l+j-t_{0}+1,l+j-t_{0}}\bigg)\\
&\ge&\mu^{n}_{1}[B(\sigma_{t_{0}})]_{1,j}.
\end{eqnarray*}
Summing the right-hand side of the above inequality with
respect to $t_{0}$ and $j$proves (ii).
\end{proof}

{\it Proof of Theorem \ref{thm2}.} Let us consider the $\mu_{1}^{n}$-graph of
$B(\sigma)$ for all $\sigma=(\sigma^{t})_{t=1}^{n}$ with $n\ge\tau_{M}+1$, as defined in Lemma \ref{lem_con_matrix}. The item (i) in
Lemma \ref{lem_con_matrix} indicates that for each vertex $v_{i,j}$
with $i\ge 2$ and $j\in\underline{m}$, there exist a path
from vertex $v_{1,j}$ to $v_{i,j}$:
$(v_{1,j},v_{2,j},\dots,v_{i,j})$.

From item (ii) in Lemma \ref{lem_con_matrix} and the conditions in
Theorem \ref{thm2}, one can see that there exits $\delta>0$ and
$L\in\mathbb N$ such that the $\delta$-graph of $\sum_{l}[\mathbb
E\{\prod_{t=n+1}^{n+L}B(\sigma^{t})|\mathcal F^{n}\}]_{1,l}$ has
spanning trees and self-links. Let $\mathcal G$ be the random
variable corresponding to the $\delta$-graph of $\mathbb
E\{\prod_{t=n+1}^{n+L}B(\sigma^{t})|\mathcal F^{n}\}$ and $\mathcal
G'$ be the random variable corresponding to the $\delta$-graph of
\allowbreak
$\sum_{l}[\mathbb E\{\prod_{t=n+1}^{n+L}B(\sigma^{t})|\mathcal
F^{n}\}]_{1,l}$. Then, for almost every graph $\mathcal G'$, there
exists an index $j_{0}\in\underline{m}$ such that for any $j$, there
exists a path $(j_{0},j_{1},\dots,j_{K-1},j)$ to access $j$. This
implies that for almost every graph $\mathcal G$, there exists a
path from $v_{1,j_{0}}$ to $v_{1,j}$. Thus, $v_{1,j_{0}}$ can access
all vertices $v_{i,j}$, $i=1,\dots,\tau_{M}+1$, since $v_{1,j}$ can
access all $v_{i,j}$ for $\tau_{M}+1\ge i\ge 2$ by a directed link
and $v_{1,j_{0}}$ and has self-link, noting that $G^{0}(\cdot) $ has
positive diagonals. Therefore, for almost every graph $\mathcal G$,
it has a spanning tree and the vertex $v_{1,j_{0}}$ is one of the
roots. From Lemma \ref{spanning}, one can see that $\mathbb
E\{\prod_{t=n+1}^{n+L}B(\sigma^{t})|\mathcal F^{n}\}$ is SIA almost
surely. According to Theorem \ref{basic_thm}, the system
(\ref{multi_undelay}) reaches consensus. This proves the theorem.

\subsection{Proof of Theorem \ref{thm3}}

{\it Outline of the proof:} For a better
understanding of the proof, we first give the following sketch.
We start the proof
by defining a permutation matrix $Q\in\mathbb R^{\tau_{M}+1,\tau_{M}+1}$
corresponding to the permutation sequence from
$(1,2\dots,\tau_{M}+1)$ to $(\langle 1\rangle,\langle
2\rangle,\dots,\langle P\rangle)$. Then we show by the lemma that follows
that the matrix $B(\sigma^{t})$ can be transformed into
the following form:
\begin{eqnarray}
[Q\otimes I_{m}]B(\sigma^{t})[Q\otimes I_{m}]^{\top}
=\left[\begin{array}{llll}\hat{B}_{1}(\sigma^{t})&0&\cdots&0\\
0&\hat{B}_{2}(\sigma^{t})&\cdots&0\\
\vdots&\vdots&\ddots&\vdots\\
0&0&\cdots&\hat{B}_{P}(\sigma^{t})\end{array}\right],\label{reorder}
\end{eqnarray}
where $\otimes$ stands for the Kronecker product and $\hat{B}_{p}(\sigma^{t})=B_{\langle \langle
p\rangle|_{P}\rangle,\langle \langle
p\rangle|_{P}\rangle}(\sigma^{t})$. By the permutation $Q$, we can
rewrite the coupled system (\ref{multi_delay1}) as
\begin{eqnarray}
\hat{y}^{t+1}=\hat{B}(\sigma^{t})\hat{y}^{t},\label{multi_delay}
\end{eqnarray}
where $\hat{y}^{t}=[Q\otimes I_{m}]y^{t}$ and
$\hat{B}(\cdot)=[Q\otimes I_{m}]B(\cdot)[Q\otimes I_{m}]^{\top}$.
This system can be divided into $P$ subsystem as
\begin{eqnarray}
\hat{y}_{p}^{t+1}=\hat{B}_{p}(\sigma^{t})\hat{y}_{p}^{t},~p\in\underline{P},
\label{equivalent}
\end{eqnarray}
where $\hat{y}_{p}^{t}$ corresponds to $[y^{t}]_{\langle\langle
p\rangle|P\rangle}$. So, it is sufficient to prove the following
claim to complete this proof from Lemma \ref{basic_thm}:

{\em Claim 1:} For each $p\in\underline{P}$, there exists
$\delta'>0$ and $L\in\mathbb N$ such that the $\delta'$-graph of the
matrix
\begin{eqnarray}
\mathbb E\bigg\{\prod_{t=n+1}^{n+L}\hat{B}_{p}(\sigma^{t})|\mathcal
F^{n}\bigg\}
\end{eqnarray}
has a spanning tree for all $n\in\mathbb N$ almost surely.

The proof of this theorem is also based on the structural
characteristics of the product of matrices $B(\cdot)$. By the lemmas below, we are to show the permutation form (\ref{reorder}) can be guaranteed.

\begin{lemma}\label{lem_syn_m1} Under the conditions of Theorem \ref{thm3}, for any $(\tau_{0}+1)$-length word
$\sigma=(\sigma_{k})_{k=1}^{\tau_{0}+1}$, there exists some $\mu_{1}>0$ such that the following hold:
\begin{enumerate}[(i).]
\item $[B(\sigma)]_{i,i}\ge\mu_{1}^{\tau_{0}+1}I_{m}$ for all
$i\in\underline{\tau_{0}+1}$;

\item
$[B(\sigma)]_{j,j-(\tau_{0}+1)}\ge I_{m}$ for all
$j\ge\tau_{0}+2$;

\item $\sum_{l\in\langle
1-j\rangle}[B(\sigma)]_{\tau_{0}+2-j,l}\ge
\mu^{\tau_{0}+1}_{1}\hat{G}^{0}(\sigma_{j})$ for all
$j\in\underline{\tau_{0}+1}$;

\item  $\sum_{l\in\langle
i+(\tau+1)\rangle}[B(\sigma)]_{i,l}\ge\mu_{1}^{\tau_{0}+1}
[B(\sigma_{\tau_{0}+2-i})]_{1,\tau+1}$
for all $i\in\underline{\tau_{0}+1}$ and
$\tau\in\underline{\tau_{M}}$.
\end{enumerate}
\end{lemma}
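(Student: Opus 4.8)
The plan is to read the four inequalities straight off the combinatorial structure of the graph $\mathcal G(B(\sigma))$ of the length-$(\tau_0+1)$ product $B(\sigma)=\prod_{k=1}^{\tau_0+1}B(\sigma_k)$. The structural properties of each factor (namely $B_{i,i-1}=I_m$ for $i\ge2$ and $B_{i,j}=0$ for $i\ge2$, $j\neq i-1$) leave exactly three kinds of nonzero blocks: the \emph{forward shift} blocks $B_{i,i-1}=I_m$, the first-row \emph{coupling} blocks $B_{1,l}=G^{l-1}(\sigma^t)$, and in particular the \emph{wrap-around} block $B_{1,\tau_0+1}=G^{\tau_0}(\sigma^t)$, which is $\ge\mu I_m$ by $\mathbf{B}.1$ and which closes a directed cycle $1\to2\to\cdots\to\tau_0+1\to1$ of length $\tau_0+1$. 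Since every block of $B(\sigma)$ is nonnegative, $[B(\sigma)]_{a,b}$ equals the entrywise sum, over directed paths $b=p_0\to p_1\to\cdots\to p_{\tau_0+1}=a$, of the ordered block products $[B(\sigma_{\tau_0+1})]_{p_{\tau_0+1},p_{\tau_0}}\cdots[B(\sigma_1)]_{p_1,p_0}$. Hence I can bound each quantity below by retaining one judiciously chosen path (and, for the two sums, one path per summand). The single algebraic fact I use repeatedly is that $A\ge\mu_1 I_m$ and $X\ge0$ force both $AX\ge\mu_1 X$ and $XA\ge\mu_1 X$, so every wrap factor $G^{\tau_0}\ge\mu_1 I_m$ can be peeled off as a scalar $\mu_1$, leaving the designated coupling block intact.

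Fixing $0<\mu_1<\mu<1$, claims (i) and (ii) are then immediate. For (ii) with $j\ge\tau_0+2$ the pure-shift path $j-(\tau_0+1)\to j-\tau_0\to\cdots\to j$ stays inside the admissible range $\{1,\dots,\tau_M+1\}$ and carries weight $I_m$, giving $[B(\sigma)]_{j,j-(\tau_0+1)}\ge I_m$. For (i) the single traversal of the cycle $i\to i+1\to\cdots\to\tau_0+1\to1\to2\to\cdots\to i$ has exactly $\tau_0+1$ edges, precisely one of which is the wrap block $G^{\tau_0}\ge\mu_1 I_m$; peeling that scalar off gives $[B(\sigma)]_{i,i}\ge\mu_1 I_m\ge\mu_1^{\tau_0+1}I_m$, the last inequality because $\mu_1<1$.

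For (iii) and (iv) the recipe is the same but the index bookkeeping is the delicate part. I route one designated coupling edge through the first block row at the prescribed time step — the edge carrying $G^{i'}(\sigma_j)$ with $i'\in\langle0\rangle$ at step $j$ for (iii), and the edge carrying $G^{\tau}(\sigma_{\tau_0+2-i})$ at step $\tau_0+2-i$ for (iv) — and fill the remaining $\tau_0$ steps with forward shifts, inserting wrap-around edges $G^{\tau_0}$ only when a shift would otherwise leave $\{1,\dots,\tau_M+1\}$. A length-$(\tau_0+1)$ path has at most $\tau_0+1$ edges, so at most $\tau_0+1$ of them are wraps; peeling their scalars produces the prefactor $\mu_1^{\tau_0+1}$, and summing the contributing paths over the admissible $i'$ (respectively over the single source block feeding a delay-$\tau$ edge) reconstructs $\hat G^0(\sigma_j)$ (respectively $[B(\sigma_{\tau_0+2-i})]_{1,\tau+1}$). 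The point that truly needs care is which source columns $l$ occur in these sums, and this is a residue computation modulo $\tau_0+1$: a forward shift and a wrap each advance the block index by $+1$, a coupling edge of delay in $\langle0\rangle$ is residue-neutral, and a delay-$\tau$ coupling edge shifts the residue by $-\tau$. Carrying out this accounting — together with the classification convention $\bmod(t+1,\tau_0+1)$ under which the classes $\langle\cdot\rangle$ are defined — pins the summation indices to the stated classes $\langle1-j\rangle$ and $\langle i+(\tau+1)\rangle$. The main obstacle throughout is precisely this simultaneous control of the residue modulo $\tau_0+1$ and of the admissible index range, while tallying the wrap factors that yield $\mu_1^{\tau_0+1}$.
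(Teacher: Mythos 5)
Your proposal is correct and follows essentially the same route as the paper's own proof: both expand the block product $[B(\sigma)]_{a,b}$ as a nonnegative sum over directed paths in the shift/wrap/coupling block structure, retain one explicitly chosen path per target entry (pure shifts for (ii), the single cycle through the wrap block $B_{1,\tau_0+1}=G^{\tau_0}\ge\mu I_m$ for (i), and a path routing the designated coupling block $[B(\sigma_j)]_{1,\cdot}$ at step $j=\tau_0+2-i$, padded by shifts and at most one wrap, for (iii)--(iv)), and peel the wrap factors off as scalars $\mu_1$ to produce the prefactor $\mu_1^{\tau_0+1}$. The paper just writes out the index bookkeeping you defer to the residue computation explicitly (and in fact with a slightly off case split $j\ge\tau$ versus $j<\tau$ in (iv), where the natural boundary is $j\le\tau+1$ versus $j\ge\tau+2$, exactly as your ``insert a wrap only when a shift would leave the admissible range'' rule prescribes).
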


\bigskip
\begin{proof} We choose $0<\mu_{1}<\mu$. (i). For any $i\in\underline{\tau_{0}+1}$, we have
\begin{eqnarray*}
&&[B(\sigma)]_{i,i}=\sum_{i_{1},\dots,i_{\tau_{0}}}
[B(\sigma_{\tau_{0}+1})]_{i,i_{1}}
[B(\sigma_{\tau_{0}})]_{i_{1},i_{2}}\cdots[B(\sigma_{1})]_{i_{\tau_{0}},i}\\
&&\ge\bigg(\prod_{p=\tau_{0}+3-i}^{\tau_{0}+1}
[B(\sigma_{p})]_{p+i-1-\tau_{0},p+i-2-\tau_{0}}\bigg)
[B(\sigma_{\tau_{0}-i+2})]_{1,\tau_{0}+1}\\
&&\bigg(\prod_{q=1}^{\tau_{0}-i+1}
[B(\sigma_{q})]_{q+i,
q+i-1}\bigg)\ge\mu I_{m}\ge\mu_{1}^{\tau_{0}+1}I_{m}
\end{eqnarray*}
since $[B(\varpi)]_{i+1,i}=I_{m}$ and
$[B(\varpi)]_{1,\tau_{0}+1}\ge\mu I_{m}$ for all $\varpi\in\Omega$ and
$i\in\underline{\tau_{M}}$.

(ii). For any $j\ge\tau_{0}+2$, we have
\begin{eqnarray*}
&&[B(\sigma)]_{j,j-(\tau_{0}+1)}=\sum_{i_{1},\dots,\tau_{0}}
[B(\sigma_{\tau_{0}+1})]_{j,i_{1}}
[B(\sigma(\tau_{0}))]_{i_{1},i_{2}}\cdots[B(\sigma_{1})]
_{i_{\tau_{0}},j-(\tau_{0}+1)}\\
&&\ge\prod_{k=1}^{\tau_{0}+1}[B(\sigma_{k})]
_{k+j-\tau_{0}-1,k+j-\tau_{0}-2}=I_{m}
\end{eqnarray*}
since $[B(\varpi)]_{i+1,i}=I_{m}$ for all $i\ge 2$ and
$\varpi\in\Omega$.

(iii). For any $i\in\underline{\tau_{0}+1}$, we have
\begin{eqnarray*}
&&\sum_{i_{1},\dots,i_{\tau_{0}},k}[B(\sigma)]_{i,i+(\tau_{0}+1)k}
=\sum_{k}[B(\sigma_{\tau_{0}+1})]_{i,i_{1}}
[B(\sigma_{\tau_{0}})]_{i_{1},i_{2}}\cdots[B(\sigma_{1})]
_{i_{\tau_{0}},(\tau_{0}+1)k+i}\\
&&\ge\bigg(\prod_{k=2}^{i}[B(\sigma_{\tau_{0}-i+k+1})]_{k,k-1}\bigg)
[B(\sigma_{\tau_{0}-i+2})]_{1,(k+1)(\tau_{0}+1)}\\
&&\bigg(\prod_{l=i+(\tau_{0}+1)(k+1)-\tau_{0}}^{(k+1)(\tau_{0}+1)}
[B(\sigma_{l-k(\tau_{0}+1)-i})]_{l,l-1}\bigg)\ge[B(\sigma_{\tau_{0}+2-i})]_{1,(k+1)(\tau_{0}+1)}
\end{eqnarray*}
for all $k\ge 0$. Summing the right-hand side with respect to $k$
 and letting $j=\tau_{0}+2-i$, we have
$\sum_{l\in\langle
1-j\rangle}[B(\sigma)]_{\tau_{0}+2-j,l}\ge\sum_{l\in\langle\tau_{0}+1\rangle}
[B(\sigma_{j})]_{1,l}$.

(iv). Let $j=\tau_{0}+2-i$. If $j\ge\tau$,
\begin{eqnarray*}
&&\sum_{k}[B(\sigma)]_{\tau_{0}+2-j,\tau_{0}+2-j+(\tau+1)+(\tau_{0}+1)k}\ge
\bigg(\prod_{p=j+1}^{\tau_{0}+1}[B(\sigma_{p})]_{p-j+1,p-j}\bigg)\\
&&[B(\sigma_{j})]_{1,\tau+1}\bigg(\prod_{q=j-\tau}^{j-1}[B(\sigma_{q})]
_{q+\tau+2-j,q+\tau+1-j}\bigg)
[B(\sigma_{j-\tau-1})]_{1,\tau_{0}+1}\\
&&\bigg(\prod_{l=1}^{j-\tau-2}[B(\sigma_{l})]_{l+\tau+\tau_{0}+3-j,
l+\tau+\tau_{0}+2-j}\bigg)
\ge\mu[B(\sigma_{j})]_{1,\tau+1};
\end{eqnarray*}
whereas if $j<\tau$,
\begin{eqnarray*}
&&\sum_{k}[B(\sigma)]_{\tau_{0}+2-j,\tau_{0}+2-j+(\tau+1)+(\tau_{0}+1)k}\ge
\bigg(\prod_{p=j+1}^{\tau_{0}+1}[B(\sigma_{p})]_{p-j+1,p-j}\bigg)
[B(\sigma_{j})]_{1,\tau+1}\\
&&\bigg(\prod_{q=1}^{j-1}[B(\sigma_{q})]_{q+\tau+2-j,q+\tau+1-j}\bigg)\ge
[B(\sigma_{j})]_{1,\tau+1}.
\end{eqnarray*}
These calculations complete the proof of the lemma.
\end{proof}


\begin{lemma}\label{lem_syn_m2} Under the conditions of Theorem \ref{thm3},
consider an $L(\tau_{0}+1)$-length word
$\tilde{\sigma}=(\tilde{\sigma}_{1},\dots,\tilde{\sigma}_{L})$,
where each $\tilde{\sigma}_{l}=(\sigma_{l,i})_{i=1}^{\tau_{0}+1}$ is
a $(\tau_{0}+1)$-length word. If $L\ge\tau_{M}+1$, then there exists $\mu_{1}>0$ such that
\begin{enumerate}[(i).]
\item $[B(\tilde{\sigma})]_{j,i}\ge\mu_{1}^{(\tau_{0}+1)L}I_{m}$ for all
$j\in\langle i\rangle$ and $i\in\underline{\tau_{0}+1}$;

\item
$\sum_{l\in\langle
i\rangle}[B(\tilde{\sigma})]_{\tau_{0}+2-j,l}\ge\mu_{1}^{(\tau_{0}+1)L}
\sum_{k}\hat{G}^{0}(\sigma_{k,j})$
for all $j\in\underline{\tau_{0}+1}$;

\item $\sum_{j\in\langle
i+\tau+1\rangle}[B(\tilde{\sigma})]_{i,j}
\ge\mu_{1}^{\tau_{0}+1}\sum_{l\in\langle\tau+1\rangle}[B(\tilde{\sigma}
_{\tau_{0}+2-i})]_{1,l}$
for all $i\in\underline{\tau_{0}+1}$ and
$\tau\in\underline{\tau_{M}}$;

\item If $\tau'$ is such that
$\tau'+1\notin\langle\tau_{0}+1,\tau_{1}+1,\dots,\tau_{K}+1\rangle$
and $[B(\sigma_{1})]_{1,\langle\tau'+1\rangle}=0$ for all
$\sigma_{1}\in\Omega$, then $[B(\tilde{\sigma})]_{i,\langle
i+\tau'+1\rangle}=0$ for all $i\ge 1$.
\end{enumerate}
\end{lemma}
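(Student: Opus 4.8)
The plan is to prove all four estimates by lifting Lemma \ref{lem_syn_m1} from a single $(\tau_0+1)$-block to the $L$-fold product $B(\tilde{\sigma})=B(\tilde{\sigma}_L)\cdots B(\tilde{\sigma}_1)$, exploiting the two structural links of $B(\cdot)$: the subdiagonal identities $B_{i,i-1}=I_m$, which let a level climb up by one per step, and the strong delayed self-link $B_{1,\tau_0+1}=G^{\tau_0}\ge\mu I_m$, which lets level $\tau_0+1$ jump back to level $1$. Throughout I fix $0<\mu_1<\mu$ and read each block-level bound of Lemma \ref{lem_syn_m1} as an elementary ``move'' to be composed across the $L$ blocks. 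The two moves I need are the \emph{stay} move $[B(\tilde{\sigma}_l)]_{p,p}\ge\mu_1^{\tau_0+1}I_m$ of part (i) and the \emph{move-up} map $[B(\tilde{\sigma}_l)]_{p,p-(\tau_0+1)}\ge I_m$ of part (ii); both preserve residue classes modulo $\tau_0+1$, the first at cost $\mu_1^{\tau_0+1}$ and the second at unit cost.

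For part (i), since $i\in\underline{\tau_0+1}$ is the least positive representative of its class, every $j\in\langle i\rangle$ satisfies $j=i+s(\tau_0+1)$ with $0\le s\le\tau_M/(\tau_0+1)<L$. I would use $s$ blocks as move-up blocks and the remaining $L-s$ as stay blocks; all intermediate levels remain in $\underline{\tau_M+1}$, and the product of the factors is at least $\mu_1^{(\tau_0+1)(L-s)}I_m\ge\mu_1^{(\tau_0+1)L}I_m$.

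Part (ii) is the crux. For each block index $k$ I would split $B(\tilde{\sigma})=P_{>k}\,B(\tilde{\sigma}_k)\,P_{<k}$ with $P_{>k}=B(\tilde{\sigma}_L)\cdots B(\tilde{\sigma}_{k+1})$ and $P_{<k}=B(\tilde{\sigma}_{k-1})\cdots B(\tilde{\sigma}_1)$. Applying part (i) to $P_{>k}$ gives the diagonal factor $[P_{>k}]_{\tau_0+2-j,\tau_0+2-j}\ge\mu_1^{(\tau_0+1)(L-k)}I_m$ (staying at the output level $\tau_0+2-j\in\underline{\tau_0+1}$); applying part (i) to $P_{<k}$ routes each relevant input column $b$ of block $k$ from its least representative $i_b\in\langle 1-j\rangle\cap\underline{\tau_0+1}$ with factor $\mu_1^{(\tau_0+1)(k-1)}I_m$; and Lemma \ref{lem_syn_m1}(iii) supplies $\sum_{b\in\langle 1-j\rangle}[B(\tilde{\sigma}_k)]_{\tau_0+2-j,b}\ge\mu_1^{\tau_0+1}\hat{G}^{0}(\sigma_{k,j})$. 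Chaining the three bounds produces, for each $k$, a family of paths into row $\tau_0+2-j$ whose weight summed over the class columns is at least $\mu_1^{(\tau_0+1)L}\hat{G}^{0}(\sigma_{k,j})$. The main obstacle is that I must then \emph{add} these contributions over $k$: I would argue that for distinct $k$ the families are disjoint, because the coupling $\hat{G}^{0}$ is realized at the distinguished position $(k,j)$, which sits in a different block for each $k$, so their weights accumulate in $\sum_{l}[B(\tilde{\sigma})]_{\tau_0+2-j,l}$ to yield $\mu_1^{(\tau_0+1)L}\sum_k\hat{G}^{0}(\sigma_{k,j})$. This disjoint-accumulation is the exact analogue of the summation in Lemma \ref{lem_con_matrix}(ii), and the delicate point is checking that the stay paths in the other blocks never double-count the block that carries the coupling.

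Parts (iii) and (iv) are lighter. For (iii) I would designate the single block $\tilde{\sigma}_{\tau_0+2-i}$ to carry the cross-class shift $\tau+1$ via Lemma \ref{lem_syn_m1}(iv), and route the remaining $L-1$ blocks entirely by the unit-cost move-up maps, so that only the designated block contributes the factor $\mu_1^{\tau_0+1}$ while the residue classes line up from column $\langle i+\tau+1\rangle$ to row $i$; the hypothesis $L\ge\tau_M+1$ guarantees enough room for the move-ups. For (iv) I would use the invariant that in any word of length $\equiv0\pmod{\tau_0+1}$ a nonzero entry $[B(\tilde{\sigma})]_{i,i'}$ forces $i'-i\equiv-\sum_s(\tau^{(s)}+1)\pmod{\tau_0+1}$, where the $\tau^{(s)}$ are the delays of the jumps used along the realizing path; by $\mathbf{B}.2$ together with $[B(\sigma_1)]_{1,\langle\tau'+1\rangle}=0$, every admissible $\tau^{(s)}+1$ lies among the generators $\tau_0+1,\tau_1+1,\dots,\tau_K+1$, so the shift is confined to the subgroup $\langle\tau_0+1,\dots,\tau_K+1\rangle_{\tau_0+1}$. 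A shift $\tau'+1$ outside this subgroup is therefore unattainable, which gives $[B(\tilde{\sigma})]_{i,\langle i+\tau'+1\rangle}=0$; a short induction on the number of blocks makes the invariant precise.
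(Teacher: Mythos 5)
Your parts (i) and (iv) are correct and essentially reproduce the paper's own arguments: (i) composes stay moves with the move-up identities exactly as the paper does, and (iv) is the paper's residue computation (a nonzero path can only use the subdiagonal identities and first-block-row jumps, so the total level shift lies in $\langle\tau_0+1,\tau_1+1,\dots,\tau_K+1\rangle_{\tau_0+1}$). The genuine gap is in (ii), at exactly the point you flag as delicate: the disjointness claim is false. Under the paper's delay classification (classes of $\tau+1$ modulo $\tau_0+1$, as forced by the proof of Lemma \ref{lem_syn_m1}(iii), which assembles $\hat G^0(\sigma_j)$ from the links $[B(\sigma_j)]_{1,(k+1)(\tau_0+1)}=G^{(k+1)(\tau_0+1)-1}(\sigma_j)$), the matrix $\hat G^0$ \emph{contains} the delayed self-link $G^{\tau_0}$ of condition $\mathbf{B}.1$. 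But $G^{\tau_0}$ is precisely the jump every stay cycle uses, and the stay cycle at level $\tau_0+2-j$ performs its jump at within-block letter position $j$ --- the same position at which your block-$k$ family realizes $\hat G^0(\sigma_{k,j})$. Hence the path consisting of stay cycles in all $L$ blocks belongs to every one of your $L$ families (it realizes the $G^{\tau_0}$ component of each), the families are not disjoint, and adding the per-$k$ bounds over-counts this common path $L$ times. The conclusion survives, but not by addition: bound the left side by $\max_k$ of the per-$k$ bounds, use $\max_k\ge\frac{1}{L}\sum_k$, and absorb the factor $1/L$ into $\mu_1$ (legitimate, since $\mu_1$ may depend on the word, hence on $L$). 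To be fair, the paper is silent here ("similar arguments"), and its model computation, Lemma \ref{lem_con_matrix}(ii), contains the same silent over-count; but your write-up asserts the false disjointness explicitly, so it must be repaired.

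Your plan for (iii) also fails, for a structural reason. The designated block has index $d=\tau_0+2-i\le\tau_0+1$, so in general many blocks are applied \emph{after} it in the product $B(\tilde\sigma_L)\cdots B(\tilde\sigma_1)$. The factor $[B(\tilde\sigma_d)]_{1,l}$ on the right-hand side forces your path to exit block $d$ at level $1$, and from there the move-up maps $[B]_{p,p-(\tau_0+1)}\ge I_m$ can only increase the level; a path using only move-ups in blocks $d+1,\dots,L$ ends at level $1+(L-d)(\tau_0+1)$, never at the target row $i\le\tau_0+1$. So routing "the remaining $L-1$ blocks entirely by unit-cost move-ups" is impossible: the trailing blocks must descend via class-$0$ jumps (stay cycles), each costing a factor $\mu$, and those costs together with the attendant residue bookkeeping are exactly what the constant $\mu_1^{\tau_0+1}$ (with $\mu_1$ rechosen small) has to absorb. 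There is also a mismatch of objects: Lemma \ref{lem_syn_m1}(iv) bounds a single-block sum by a single \emph{letter}, $[B(\sigma_{\tau_0+2-i})]_{1,\tau+1}$, whereas the right-hand side of (iii) is the first block-row of the whole block product, $[B(\tilde\sigma_{\tau_0+2-i})]_{1,l}$; the correct argument must keep that product intact, i.e.\ split $B(\tilde\sigma)=P_{>d}\,B(\tilde\sigma_d)\,P_{<d}$ and route $P_{<d}$ and $P_{>d}$ around it, rather than descend to a single letter inside block $d$.
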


\begin{proof} We pick some $\mu_{1}<\mu$. (i). For $j\le\tau_{0}+1$, the proof is similar to the
proof of item (i) of Lemma \ref{lem_syn_m1}. For $j\ge\tau_{0}+2$,
we have
\begin{eqnarray*}
[B(\tilde{\sigma})]_{j,i}\ge\bigg(\prod_{l=l_{1}}^{L}
[B(\tilde{\sigma}_{l})]_{j-(L-l)(\tau_{0}+1),j-(L-l+1)(\tau_{0}+1)}\bigg)
\bigg(\prod_{p=1}^{l_{1}}[B(\tilde{\sigma}_{p})]_{i,i}\bigg)
\ge\mu_{1}^{(\tau_{0}+1)L}I_{m},
\end{eqnarray*}
where $l_{1}=L+1-(j-i)/(\tau_{0}+1)$ is an integer (noting
$j\in\langle i\rangle$), since\\
$[B(\tilde{\sigma}_{l})]_{j-(L-l)(\tau_{0}+1),j-(L-l+1)(\tau_{0}+1)}\ge I_{m}$ holds here, as mentioned in Lemma \ref{lem_syn_m1} (ii).

The items (ii) and (iii) can be proved by similar arguments as in
the proof of items (iii) and (iv) of Lemma \ref{lem_syn_m1}. It
remains to prove item (iv). In the following, we will prove a
slightly more general result,  namely that $[B(\sigma)]_{i,\langle
i+\tau'+1\rangle}=0$ for all words $\sigma$ having length
$L\in\langle \tau_{0}+1 \rangle$. Let
$\sigma=(\sigma_{i})_{i=1}^{L}$ be an arbitrary $L$-length word. We
calculate $[B(\sigma)]_{i,j}$ with $j\in\langle i+\tau'+1\rangle$ as
a sum of several matrix product terms:
\begin{eqnarray*}
[B(\sigma)]_{i,j}=\sum_{i_{1},\dots,i_{L-1}}[B(\sigma_{L})]_{i,i_{1}}
[B(\sigma_{L-1})]_{i_{1},i_{2}}\cdots
[B(\sigma_{1})]_{i_{L-1},j}.
\end{eqnarray*}
Since any zero factor yields zero product, we avoid zero factors in
the calculations. That is,  in the expression above, only factors of
the form $[B(\sigma_{l})]_{i+1,i}$ and $[B(\sigma_{l})]_{1,j}$ can
occur where $j\in\langle i+\tau'+1\rangle$ and
$\tau'+1\notin\langle\tau_{0}+1,\tau_{1}+1,\dots,\tau_{K}+1\rangle$.
Thus, letting $j_{1}=i$, we have
\begin{eqnarray*}
&&[B(\sigma)]_{i,j}=\sum_{j_{1},\dots,j_{V},V}\bigg\{\bigg
[\prod_{l=1}^{V}\bigg(\prod_{k_{l}
=L-\sum_{p=1}^{l}j_{p}+2}^{L-\sum_{p=1}^{l-1}j_{p}}
[B(\sigma_{k_{l}})]_{\sum_{p=1}^{l}j_{p}+k_{l}-L,
\sum_{p=1}^{l}j_{p}+k_{l}-L-1}\bigg)\\
&&[B(\sigma_{L-\sum_{p=1}^{l}j_{p}+1})]
_{1,j_{p}+1}\bigg]\bigg(\prod_{k_{V+1}=1}^{L-\sum_{p=1}^{V}j_{p}}
[B(\sigma_{k_{V+1}})]_{L-\sum_{p=1}^{V}j_{p}+k_{V+1},
\sum_{p=1}^{V}L-\sum_{p=1}^{V}j_{p}+k_{V+1}-1}\bigg)\bigg\},
\end{eqnarray*}
where each
$j_{p}\in\langle\tau_{0}+1,\tau_{1}+1,\dots,\tau_{K}+1\rangle$.
Suppose that the matrix product is nonzero. Then
$j=\sum_{p=1}^{V}j_{p}-L$, i.e.,
$\langle(i+\tau'+1)-(\sum_{p=1}^{V}j_{p}-L)\rangle=0$, which
implies $\langle \tau'+1-\sum_{p=2}^{V}j_{p}+L \rangle=0$. This
means that
$\tau'+1\in\langle\tau_{0}+1,\tau_{k}+1:~k=1,\dots,K\rangle$,
which contradicts the condition
$\tau'+1\notin\langle\tau_{0}+1,\tau_{k}+1:~k=1,\dots,K\rangle$.
The lemma is proved.
\end{proof}

{\it Proof of Theorem \ref{thm3}.} Consider the graph $\hat{\mathcal
G}^{\delta}(\sigma^{t})=\{\hat{\mathcal V},\hat{\mathcal
E}(\sigma^{t})\}$ on $(\tau_{M}+1)m$ vertices corresponding to the
$\delta$-graph of the matrix $B(\sigma^{t})$ as defined at the
beginning of this section. For  $L\in\mathbb N$ as fixed
in the main condition of Theorem \ref{thm3} and an arbitrary
fixed $m\in\mathbb N$, let ${\mathbb  B}=\mathbb
E\{\prod_{t=n+1}^{n+L}B(\sigma^{t})|\mathcal F^{n}\}$ and
$\hat{\mathcal G}^{\delta}$ be the random variable picked in the
$\delta$-graphs of $\mathbb B$.

First, we divide the graph $\hat{\mathcal G}^{\delta}$ into
$\tau_{M}+1$ subgraphs: $\mathcal G_{k}^{\delta}=\{\mathcal
V_{k},\mathcal E_{k}(\sigma^{t})\}$, $k\in\underline{\tau_{M}+1}$,
where $\mathcal V_{k}=\{v_{k,i}:~i\in\underline{m}\}$ corresponds to
the rows or columns of ${\mathbb  B}_{k,k}$ and the vertex $v_{k,i}$
corresponds the $i$-th row or column of the matrix ${\mathbb  B}_{k,k}$.
Then, integrate the subgraphs $\{\mathcal
G_{k}^{\delta}\}_{k=1}^{\tau_{M}+1}$ into $\tau_{0}+1$ subgraphs:
$\mathcal {G'}_{l}^{\delta}=\{\mathcal V'_{l},\mathcal E'_{l}\}$,
$l\in\underline{\tau_{0}+1}$, where $\mathcal
V_{l}'=\bigcup_{k\in\langle l \rangle}\mathcal V_{k}$,
$l\in\underline{\tau_{0}+1}$ and $\mathcal E_{l}'$ corresponds to
the intra-links in $\mathcal V_{l}'$. Let $\mathcal E_{l_{1},l_{2}}$
denote the inter-links from the subgraph of $\mathcal V_{l_{2}}'$ to
the subgraph $\mathcal V_{l_{1}}'$. Lemma \ref{lem_syn_m2} (i)
implies that for each $l\in\underline{\tau_{0}+1}$, there must exist
a link from $v_{l,i}$ to $v_{k,i}$ in the subgraph $\mathcal
{G'}_{l}^{\delta}(\cdot)$, for each vertex $v_{k,i}\in\mathcal
V_{k}$ with $k>l$ and $k\in\langle l\rangle$. Similarly to the the
proof of Theorem \ref{thm2}, the main condition of Theorem
\ref{thm3} and items (ii) and (iii) in Lemma \ref{lem_syn_m2} imply that
there exist $\delta_{1}>0$ and $L\in\mathbb N$ such that the
subgraph $\mathcal {G'}_{l}^{\delta_{1}}$ is strongly connected,
consequently having a spanning tree, and each vertex in $\mathcal
V_{l}$ is one of the roots in $\mathcal {G'}_{l}^{\delta_{1}}$ and
has a self-link almost surely for all $l\in\underline{\tau_{0}+1}$.

Second, according to
$\mbox{gcd}(\tau_{0}+1,\tau_{k}+1:~k\in\underline{K})=P$, we
integrate the subgraphs $\mathcal {G'}_{l}^{\delta_{1}}$ for all
$l\in\underline{\tau_{0}+1}$, into $P$ subgraphs, denoted by $\tilde{\mathcal
G}_{p}^{\delta_{1}}=\{\tilde{\mathcal V}_{p},\tilde{\mathcal
E}_{p}\}$, $p\in\underline{P}$ by $\tilde{\mathcal V}_{p}=\{\mathcal
V'_{j}:~\mathcal E_{j,p}\ne\emptyset\}$. The items (ii) and (iii) in Lemma
\ref{lem_syn_m2} and the second item in condition $\bf B$ indicate
that the $\delta_{1}$-matrix of $\sum_{j\in\langle
\tau_{k}+1:~k=0,1,\dots,K\rangle}{\mathbb  B}_{l,l+j}$ is positive for
all $l\in\underline{\tau_{0}+1}$. This implies that there exists at
least one link from $\mathcal {G'}_{l+j}^{\delta_{1}}$ to $\mathcal
{G'}_{l}^{\delta_{1}}$ and this link end in $\mathcal V_{l}$. So, in
the graph $\mathcal {G'}^{\delta_{1}}$, the root vertex in $\mathcal
{G'}_{l+j}^{\delta_{1}}$ can reach all vertices in $\mathcal
{G'}_{l}^{\delta_{1}}$ since each vertex in $\mathcal \mathcal
V_{l}$ is a root vertex in $\mathcal {G'}_{l}^{\delta_{1}}$. This
leads to the conclusion that $\mathcal V'_{j}\subset\tilde{\mathcal
V}_{l}$ provided $j-l\in\langle\tau_{k}+1:k=0,1,\dots,K\rangle$.
Also, we can conclude that each root vertex in ${\mathcal
{G'}_{l+j}}^{\delta_{1}}$ can reach all vertices in $\mathcal
{G'}_{l}^{\delta_{1}}$, by item (i) in Lemma \ref{lem_syn_m2}.
Therefore, we can conclude that $\tilde{\mathcal
V_{p}}=\bigcup_{l\in\langle p \rangle_{P}}\mathcal V'_{p}$ and each
$\tilde{\mathcal G}_{p}$ has a spanning tree almost surely. This
proves Claim 1. Moreover, there exists a vertex with self-link
in $\mathcal V_{i}$, $i\in\underline{\tau_{0}+1}$ and $i\in\langle
p\rangle_{P}$, as one of its roots, in $\hat{\mathcal G}^{\delta_{1}}$.
So, according to the arbitrariness of integer $n$, we can conclude that the
$\delta_{1}$-graph of $\mathbb
E\{\prod_{t=n+1}^{n+L}\hat{B}_{p}(\sigma^{t})|\mathcal F^{n}\}$ is
SIA almost surely for all $n\in\mathbb N$.

Finally, according to the second item in condition $\bf B$ and the
(iv) item in Lemma \ref{lem_syn_m2}, one can conclude that there are
no links between the graph $\tilde{\mathcal G}_{p}^{\delta}$ for
different $p\in\underline{P}$ for any $\delta\ge 0$. So, by a
permutation matrix $Q$ corresponding to the permutation sequence
from $(1,2\dots,\tau_{M}+1)$ to $(\langle 1\rangle,\langle
2\rangle,\dots,\langle P\rangle)$, $[Q\otimes
I_{m}]B(\sigma^{t})[Q\otimes I_{m}]^{\top}$ has the form
(\ref{reorder}).

By Theorem \ref{basic_thm}, we can conclude that (\ref{equivalent}) reaches
consensus for all $p=1,\dots,P$, but converges to different values
except for initial values in a set of Lebesgue measure zero.
Therefore, $x^{t}$ can synchronize and converge to a $P$-periodic
trajectory. This completes the proof of Theorem \ref{thm3}.

\section{Conclusions}

In this paper we have studied the convergence of the consensus
algorithm in multi-agent systems with stochastically switching
topologies and time delays. We have shown that consensus can be obtained
if the graph corresponding to the conditional expectations of the
coupling matrix product in consecutive times has spanning trees almost surely and
self-links are possible. With multiple delays, if self-links always
exist and are instantaneous (undelayed), then consensus can be guaranteed for
arbitrary bounded delays. Moreover, when the self-links are also delayed,
we have shown the phenomenon that the algorithm may not
reach consensus but instead may synchronize to a periodic trajectory
according to the delay patterns. Finally, we have  briefly
studied consensus algorithms without self-links. We have presented
several results for i.i.d. and Markovian switching topologies as
special cases.

\section*{Acknowledgments} W. Lu is supported by the National Key Basic Research and Development Program (No. 2010CB731403), the National Natural Sciences Foundation of China under Grant No. 60804044, the Foundation for the Author of National Excellent Doctoral Dissertation of China No. 200921, SGST 09DZ2272900, and LMNS, Fudan University.

\end{document}